\newtheorem{thm}{Theorem}[section]
\newtheorem{cor}[thm]{Corollary}
\newtheorem{exm}{Example}
\newtheorem{lem}[thm]{Lemma}
\newtheorem{prop}[thm]{Proposition}
\theoremstyle{definition}
\newtheorem{defn}[thm]{Definition}
\theoremstyle{remark}
\newtheorem{rem}[thm]{Remark}
\numberwithin{equation}{section}
\begin{document}

\title[]{Sufficient and Necessary Conditions for the Solvability of the State Feedback Regulation Problem}
\author{S. Boulite,   H. Bouslous, L. Maniar and R. Saij }

\address{{\it H. Bouslous, L. Maniar  and R. Saij}\\
University Cadi Ayyad, Faculty of Sciences Semlalia\\ Department of Mathematics
\\B.P. 2390, 40000 Marrakesh, Morocco.}

\email{bouslous@ucam.ac.ma,
maniar@ucam.ac.ma, saijrachid@gmail.com}
\address{ {\it S. Boulite}\\
 University Hassan II Casablanca, Faculty of
Sciences Ain Chock\\ Department of Mathematics \& Informatics \\B.P. 5366 Maarif 20100 Casablanca,  Morocco.}
\email{boulite@gmail.com}

\thanks{}
\subjclass[2000]{35R99,  37N35, 47D06, 93B50, 93B52, 93D15, 93D99}
\maketitle

{\bf Abstract.} {\small
In this paper, we discuss the state feedback output regulation problem (SFRP) for infinite-dimensional linear control systems with infinite-dimensional exosystems. Under the polynomial stabilizability assumption, sufficient and necessary conditions are given for the solvability of the SFRP. The solvability of this problem is characterized in terms of the solvability of a pair of linear regulator equations. An application of the solvability of the SFRP for polynomial stable SISO system is given.
}

\noindent
{\bf Keywords}: {Infinite-Dimensional Systems, Infinite-Dimensional Exosystems, Output Regulation, Polynomial stabilizability.}

\section{INTRODUCTION}
In this paper, we are interested on the state feedback output regulation problem of the system described by the following equations
\begin{equation}\label{plant}
\left\{\begin{array}{cclccccc}
\dot{z}(t) & = & Az(t)+Bu(t)+\mathcal{U}_{d}(t), &  &t\geq0 && \\
y(t) & = & Cz(t), &  &t\geq0&& \\
z(0) & = & z_{0}.
\end{array}\right.
\end{equation}
Here $A$ generates a  $C_{0}$-semigroup $T_{A}(t)$, $t\geq0$, on a complex Banach space $Z$.
 The state of the plant (\ref{plant}) is denoted by $z(t)\in Z$. The continuous input $u: \mathbb{R}_{+}\longrightarrow U$ and the continuous output
 $y: \mathbb{R}_{+}\longrightarrow Y$ take values in a complex Banach spaces $U$ and $Y$ respectively. The control operator $B\in \mathcal{L}(U,Z)$ and the observation operator $C\in \mathcal{L}(Z,Y)$. The bounded uniformly continuous function
 $\mathcal{U}_{d}: \mathbb{R}\longrightarrow Z$ represents a disturbance.

In addition, we assume that there exists an infinite-dimensional linear system, referred to as the exogenous system (or exosystem), that generates a bounded uniformly continuous reference signals $y_{r}$ and  disturbance signals $\mathcal{U}_{d}$
\begin{equation}\label{exo}
\left\{\begin{array}{cccccccc}
\dot{w}(t) & = & Sw(t),& & t\in \mathbb{R}&&  \\
y_{r}(t) & = & Qw(t) ,& &t\in \mathbb{R}&&\\
\mathcal{U}_{d}(t) & = & Pw(t),  &  &t\in \mathbb{R}&&\\
w(0)&=& w_{0}.
\end{array}\right.
\end{equation}
Here $S$ generates an isometric $C_{0}$-group $T_{S}(t),t\in \mathbb{R},$ on a Banach space $W$, $P\in \mathcal{L}(W,Z)$ and $Q\in \mathcal{L}(W,Y)$.
We denote the error between the measured and reference outputs by $$e(t):=y(t)-y_{r}(t)=Cz(t)-Qw(t).$$
In general, the output regulation problem involves the construction of a control law which stabilizes  the plant (\ref{plant})  and drives the measured output  to   achieve asymptoticaly  the  reference signal  $y_{r}$ inspite of the disturbances $\mathcal{U}_{d}$. i.e.,  $e(t)\longrightarrow0, \quad t\longrightarrow\infty$.

For finite-dimensional linear systems, the SFRP was studied by Davison, Francis, Wonham and others (see e.g. \cite{{Davison1},{Davison2},{Davison3},{Francis}} and the references therein). In \cite{Francis}, Francis  presented a complete characterization for  SFRP in terms of solvability of the so called regulator equations\begin{equation}\label{reg eq}
\left\{\begin{array}{ccccccc}
A\Pi+B\Gamma+P & = & \Pi S&in\quad D(S)&&&  \\
C\Pi & = & Q &in\quad W.
\end{array}\right.
\end{equation}
In \cite{Davison3}, Davison  used  similar method for the construction of the control law which regulates the measured output.
Many authors  also constructed such control law for infinite dimensional linear plants with finite dimensional exosystems, e.g. Pohjolainen \cite{Pohjolainen} and Byrnes et al. \cite{Byrnes}. Under the exponential stabilizability assumption of the system (\ref{plant}), they proved a complete characterization for existence (and construction) of a regulating control law in terms of solvability of the regulator equations (\ref{reg eq}).  Subsequently, Immomen and Pohjolainen have considered infinite dimensional exosystems generating periodic reference signals in  \cite{Immonen0}.  In the same year,  
in  \cite{Immonen01}, Immonen and Pohjolainen generalized the above results  to the  case of   strongly stabilizable plants (\ref{plant}) and bounded uniformly continuous exosystems (\ref{exo}). More exactely, 
they proved that if the pair $(A,B)$  is strongly stabilizable by a feedback operator  $K$ and if the regulator equations (\ref{reg eq}) have a solution ($\Pi,\Gamma$), then the SFRP is  solved  by the control law $u(t)=Kz(t)+(\Gamma-K\Pi)w(t)$.
On the other hand, the converse problem, or the necessary condition,  is also  studied in  the case of finite dimensional exosystems and exponentially stabilizable   plants (\ref{plant}) in  \cite{Byrnes}. But, In the case of strong stabilizability,  Immonen and Pohjlainen \cite{Immonen01} (see \cite[Chapter 3]{Immonen})   needed   additional assumptions. First, they introduced the concept of regular operators. They  gave a characterization of regular operator via the operator equation
\begin{equation}\label{op eq}
A\Pi+\Delta=\Pi S
\end{equation}
and established that the operator $\Delta\in\mathcal{L}(W,Z)$ is regular for $T_{A}(t)$ if and only if the operator equation (\ref{op eq}) has a solution $\Pi\in\mathcal{L}(W,Z)$. This allowed   to solve the first regulator equation in (\ref{reg eq}). By imposing certain auxiliary conditions for the reference signals, they established that the second regulator equation of (\ref{reg eq}) were verified.

Notice that the operator equation (\ref{op eq}), refered as Sylvester equation,  was studied by many authors. In particular, in \cite{Phong}, Phong established that if $T_{A}(t)$ is exponentially stable then the operator equation (\ref{op eq}) has a unique bounded solution $\Pi:\;W\longrightarrow Z$ given by
\begin{equation}\label{expression de pi}
\Pi w=\int_{0}^{\infty}T_{A}(t)\Delta T_{S}(-t)w\,dt
\end{equation}
for all $w\in W$. Further, if $A$ generates a strongly stable $C_{0}$-semigroup, then the operator equation (\ref{op eq}) does not necessarily  have a solution. Hence, one may ask: what about the solution of (\ref{op eq}) if $T_{A}(t)$ is polynomially stable? In this work, we show that, for every $\Delta\in\mathcal{L}(W,Z)$, if the operator equation has a bounded solution $\Pi:\;W\longrightarrow Z$ then necessarily the operator $\Pi$ is given by the same formula as (\ref{expression de pi}) for all $w\in D(S)$. Then, we will call  every operator  $\Delta$ such that the integral in (\ref{expression de pi}) converges,  for all $w\in D(S)$, a conform operator for $T_{A}(t)$. In this case, we show that  the operator equation (\ref{op eq}) is solved by (\ref{expression de pi}). 

This paper is concerned with  output regulation problem  in the case of polynomially stabilizable  plants. Our SFRP can be formulated as follows:
let $y_{r}$ be a given signal reference. The task is to find a feedback control law of the form $$u(t)=Kz(t)+Lw(t),$$ 
for some $K\in \mathcal{L}(Z,U)$ and  $L\in \mathcal{L}(W,U)$, such that 
\begin{itemize}
\item[$\bullet$] $A+BK$ generates a bounded polynomially stable $C_{0}$-semigroup $T_{A+BK}(t)$ on $Z$.
\item[$\bullet$] For the extended closed loop system
\end{itemize}
\begin{equation}
\left\{\begin{array}{lll}
\dot{z}(t) & = (A+BK)z(t)+(BL+P)w(t),& t\geq0 \\
\dot{w}(t)& = Sw(t),& t\geq0
\end{array}\right.
\end{equation}
the tracking error $e(t)=Cz(t)-Qw(t)\longrightarrow0$ as $t\longrightarrow\infty$
for any initial conditions $z_{0}\in Z$ and $w_{0}\in W$, and $\|e(t)\|_{Y}\leq m\,t^{-\frac{1}{\alpha}},\quad t>0$ for $z_{0}\in D(A)$ and $w_{0}\in D(S)$.

Here, we are studying the problem of output regulation problem under the assumption of polynomial stabilization. In our knowledge, this last property is not studied in the litterature, and it is now under study and recent results will appear in a forthcoming paper. 
Note also that we have assumed that the closed loop semigroup is  aslo bounded for some technical problems. The general case is under study.

In this paper, we give first a 
sufficient  condition for the solvability of  SFRP under the polynomial stabilizability assumption of the plant, similar to the one of  Immonen and Pohjolainen \cite{Immonen01}.  To give  necessary conditions, we  use the conform operator notion introduced above and, under some assumptions, we give a characterization for the solvability
 of  SFRP and the regulator equations. In order to illustrate the obtained results, under some assumptions, we will solve explicitly  SFRP and the regulator equations for a diagonalizable $SISO$ system. Finally, an example of periodic tracking for a controlled wave equation is given.

\section{Preliminaries on polynomially stable $C_{0}$-semigroups}
In this section we fix our notations and review some results on polynomially stable $C_{0}$-semigroups. By $D(A)$, $\sigma(A)$, $\rho(A)$,
 we denote the domain, the spectrum, the resolvent set of a linear operator $A$, respectively, and we set $R(\lambda,A)=(\lambda I-A)^{-1}$
 for $\lambda\in \rho(A)$. The open left half-plane of $\mathbb{C}$ is denoted by $\mathbb{C}^{-}$.
 Throughout this section, $A$ is the generator of a $C_{0}$-semigroup $T(t)$ on a Banach space $Z$.
 Fix a real number $\mu$ such that $\|T(t)\|\leq M\,e^{(\mu-\varepsilon)t}$ for some constants $M,\varepsilon>0$ and all $t\geq0$.
 The fractional powers of $A_{\mu}:=\mu I-A$ are defined by
 $$A_{\mu}^{-\alpha}=\frac{1}{2\pi i}\int_{\Gamma}(\mu-\lambda)^{-\alpha}R(\lambda,
A)\,d\lambda,$$
for any  $\alpha>0$ and $\Gamma$ is any piecewise smooth path in the set \, $\{\lambda\in \mathbb{C}: Re\lambda>\mu-\varepsilon, \lambda\notin[\mu,\infty)\}$
 running from $\infty e^{-i\phi}$ to $\infty e^{i\phi}$ for some $0<\phi<\pi/2$. We further set $A^{0}_{\mu}=I$. The operator $A_{\mu}^{-\alpha}$
 is injective and bounded, hence it has a closed inverse denoted by $A_{\mu}^{\alpha}$. The domain $Z_{\alpha}:=D(A_{\mu}^{\alpha})$ is independent
of the choice of $\mu$. The domains $Z_{\alpha}$ endowed with the norm $\|z\|_{\alpha}=\|A_{\mu}^{\alpha}z\|_{Z}$, $\alpha\geq0$, $Z_{0}=Z$,  are Banach spaces. Observe that $Z_{\gamma}$ is continuously and densely embedded in $Z_{\alpha}$ for $\gamma\geq\alpha\geq0$ and that $\|z\|_{n}$ is equivalent
 to the usual graph norm of $A^{n}$ for $n\in \mathbb{N}$ which is denoted, in general, by $\|\cdot\|_{A}$. Moreover, the fractional powers commute with $T(t)$ and $A$.
 \begin{defn}\cite{Batkai}
 A $C_{0}$-semigroup $T(t)$, $t\geq0$ is called polynomially stable if there are constants $\alpha, \beta>0$ such that
 \begin{equation}\label{def.poly}
 \|T(t)A_{\mu}^{-\alpha}\|\leq N\,t^{-\beta}
 \end{equation}
 for some constant $N>0$ and all $t>0$.
 \end{defn}
 We denote that the inequality (\ref{def.poly}) is equivalent to
 \begin{equation}
 \|T(t)z\|\leq N\,t^{-\beta}\|z\|_{\alpha}
 \end{equation}
 for all $z\in Z_{\alpha}=D(A_{\mu}^{\alpha})$.
 Note that the above definition is independent of $\mu$ and that the estimate (\ref{def.poly}) with $\alpha=0$ and $\beta>0$ already implies that $T(t)$, $t\geq0$ is exponentially stable, i.e.
 \begin{equation*}\|T(t)\|\leq M\,e^{-a t}\end{equation*} for all $t\geq0$ and some constants $M,a>0$.
  It was shown in \cite{Batkai} that a polynomially stable $C_{0}$-semigroup satisfies
 \begin{equation}\label{polywithgamma}
 \|T(t)A_{\mu}^{-\alpha \gamma}\|\leq N(\gamma)\,t^{-\beta \gamma}
 \end{equation}
 for each $\gamma\geq1$. Moreover, inequality (\ref{polywithgamma}) holds for all $\gamma>0$ if $T(t),t\geq0$ is polynomially stable and bounded, i.e.
 \begin{equation}
 \|T(t)\|\leq M
 \end{equation}
 for some constant $M\geq1$ and all $t\geq0$. In this case, we have
 \begin{equation}\label{polybound}
 \|T(t)A_{\mu}^{-\alpha}\|\leq \frac{N}{t}
 \end{equation}
 for all $t>0$ (with a different $\alpha$, in general). Due to \cite[Proposition 3.3]{Batkai}, we have $\sigma(A)\subset\mathbb{C}^{-}$ and therefore we may normalize (\ref{polybound}) to the estimate \begin{equation}\label{polywithZalpha}
 \|T(t)(-A)^{-\alpha}\|\leq \frac{N}{t}
 \end{equation}
 for all $t>0$ and some $\alpha>0$, or equivalently
 \begin{equation}\label{polywithD(A)}
 \|T(t)A^{-1}\|\leq \frac{N}{t^{1/\alpha}}
 \end{equation}
for all $t>0$ and some $\alpha>0$. Finally, by density argument, we remark that a bounded polynomially stable $C_{0}$-semigroup is always strongly stable.
\section{sufficient conditions for the solvability of the (SFRB)}\label{section suffcond}
  In this section, under the assumption that $A+BK$ generates a bounded polynomially stable $C_{0}$-semigroup $T_{A+BK}(t)$, we shall give sufficient conditions for the solvability of the SFRP. Before given our main result in this section, we recall first, from \cite{Immonen01,Immonen}, the following lemma, and for the sake of completeness, we give a short proof.
  \begin{lem}
  Let $Z$ and $W$ be Banach spaces, let $A$ generates a $C_{0}$-semigroup $T_{A}(t)$ on $Z$ and $S$ generates a $C_{0}$-semigroup $T_{S}(t)$ on $W$
  and let $\Delta\in\mathcal{L}(W,Z)$. If there exists $\Pi\in\mathcal{L}(W,Z)$ such that $\Pi(D(S))\subset D(A)$ and $\Pi$ satisfies the Sylvester
    type operator equation (\ref{op eq}), namely
    \begin{equation*}
    \Pi S=A\Pi+\Delta
    \end{equation*}
    then
    \begin{equation}\label{lemmaprelim}
    \int_{0}^{t}T_{A}(t-\sigma)\Delta T_{S}(\sigma)w_{0}d\sigma=\Pi T_{S}(t)w_{0}-T_{A}(t)\Pi w_{0}
    \end{equation}
    for all $t\geq0$, $w_{0}\in W$.
  \end{lem}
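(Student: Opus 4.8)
The plan is to fix $w_{0}\in D(S)$ and introduce the auxiliary $Z$-valued function
\begin{equation*}
f(\sigma)=T_{A}(t-\sigma)\,\Pi\,T_{S}(\sigma)w_{0},\qquad \sigma\in[0,t].
\end{equation*}
The guiding idea is that differentiating $f$ should reproduce exactly the integrand on the left-hand side of (\ref{lemmaprelim}), so that the fundamental theorem of calculus will deliver the identity once $f$ is evaluated at the endpoints $\sigma=0$ and $\sigma=t$. First I would check that $f$ is well defined and strongly differentiable. Since $w_{0}\in D(S)$, the orbit $\sigma\mapsto T_{S}(\sigma)w_{0}$ remains in $D(S)$ and is differentiable with derivative $S\,T_{S}(\sigma)w_{0}=T_{S}(\sigma)Sw_{0}$; hence $u(\sigma):=\Pi\,T_{S}(\sigma)w_{0}$ lies in $\Pi(D(S))\subset D(A)$ and, $\Pi$ being bounded, is differentiable with $u'(\sigma)=\Pi\,S\,T_{S}(\sigma)w_{0}$.

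Next I would carry out the product-rule differentiation. Writing the difference quotient as
\begin{equation*}
\frac{f(\sigma+h)-f(\sigma)}{h}=T_{A}(t-\sigma-h)\,\frac{u(\sigma+h)-u(\sigma)}{h}+\frac{T_{A}(t-\sigma-h)-T_{A}(t-\sigma)}{h}\,u(\sigma)
\end{equation*}
and letting $h\to0$, the first term converges to $T_{A}(t-\sigma)u'(\sigma)$ by strong continuity of $T_{A}(t)$ and differentiability of $u$, while the second converges to $-A\,T_{A}(t-\sigma)u(\sigma)$ precisely because $u(\sigma)\in D(A)$. This yields
\begin{equation*}
f'(\sigma)=T_{A}(t-\sigma)\,\Pi\,S\,T_{S}(\sigma)w_{0}-A\,T_{A}(t-\sigma)\,\Pi\,T_{S}(\sigma)w_{0}.
\end{equation*}
The key simplification now comes from the Sylvester equation (\ref{op eq}). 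Since $u(\sigma)\in D(A)$, the generator commutes with the semigroup, $A\,T_{A}(t-\sigma)u(\sigma)=T_{A}(t-\sigma)A\,u(\sigma)$, and as $T_{S}(\sigma)w_{0}\in D(S)$ I may substitute $\Pi S-A\Pi=\Delta$ to get $f'(\sigma)=T_{A}(t-\sigma)\,\Delta\,T_{S}(\sigma)w_{0}$. Integrating over $[0,t]$ and using $f(t)=\Pi\,T_{S}(t)w_{0}$ and $f(0)=T_{A}(t)\,\Pi\,w_{0}$ gives (\ref{lemmaprelim}) for every $w_{0}\in D(S)$.

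Finally I would remove the restriction $w_{0}\in D(S)$ by a density argument. The right-hand side of (\ref{lemmaprelim}) involves only the bounded operators $\Pi$, $T_{S}(t)$ and $T_{A}(t)$, and on the compact interval $[0,t]$ the bound $\|T_{A}(t-\sigma)\Delta T_{S}(\sigma)w_{0}\|\leq (\sup_{[0,t]}\|T_{A}\|)(\sup_{[0,t]}\|T_{S}\|)\|\Delta\|\,\|w_{0}\|$ shows that the integral depends continuously on $w_{0}$; since $D(S)$ is dense in $W$, the identity extends to all $w_{0}\in W$. The main obstacle is the second step: one must carefully justify the strong convergence of the difference quotient involving the unbounded operator $A$ to $-A\,T_{A}(t-\sigma)u(\sigma)$, which is exactly where the hypothesis $\Pi(D(S))\subset D(A)$ is indispensable. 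Once $f'$ is correctly identified, the integration and the density extension are routine.
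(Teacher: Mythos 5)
Your proposal is correct and follows essentially the same route as the paper: both prove the identity for $w_{0}\in D(S)$ by recognizing $T_{A}(t-\sigma)\Delta T_{S}(\sigma)w_{0}$ as the derivative of $\sigma\mapsto T_{A}(t-\sigma)\Pi T_{S}(\sigma)w_{0}$ via the Sylvester equation, and then extend to all of $W$ by density and boundedness of both sides. The only difference is that you spell out the product-rule differentiation that the paper leaves implicit.
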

   \begin{proof}
   Let $\omega_{0}\in D(S)$, by (\ref{op eq}), we have
   $$\begin{array}{cll}
         \int_{0}^{t}T_{A}(t-\sigma)\Delta T_{S}(\sigma)w_{0}d\sigma& = & \int_{0}^{t}T_{A}(t-\sigma)(\Pi S-A\Pi)T_{S}(\sigma)w_{0}d\sigma \\
         &&\\
     & = &  \int_{0}^{t}\frac{d}{d\sigma}T_{A}(t-\sigma)\Pi T_{S}(\sigma)w_{0}d\sigma \\
     &&\\
     & = & \Pi T_{S}(t)w_{0}-T_{A}(t)\Pi w_{0}
  \end{array} $$
  for all $t\geq0$.
  From another hand, for each $t\geq0$, it is clear that the operators $R_{1}(t)$ and $R_{2}(t)$ defined by
 \begin{equation*}
  R_{1}(t)w:=\int_{0}^{t}T_{A}(t-\sigma)\Delta T_{S}(\sigma)w d\sigma
  \end{equation*}
  \begin{equation*}
  R_{2}(t)w:=\Pi T_{S}(t)w-T_{A}(t)\Pi w
 \end{equation*}
  are in $\mathcal{L}(W,Z)$. Since $D(S)$ is dense in $W$ and $R_{1}(t)w=R_{2}(t)w$ for all $w\in D(S)$ and all $t\geq0$, we can extend the equality
  $R_{1}(t)w=R_{2}(t)w$ for each $t\geq0$ to hold for every $w\in W$.
  \end{proof}
  Our main result in this section is the following.
  \begin{thm}
  Assume that $A+BK$ generates a bounded polynomially stable $C_{0}$-semigroup $T_{A+BK}(t)$. If the regulator equations (\ref{reg eq}), namely
  \begin{equation*}
  \left\{\begin{array}{ccccccc}
    A\Pi+B\Gamma+P & = & \Pi S&in\quad D(S)&&&  \\
    C\Pi & = & Q &in\quad W&&&
  \end{array}\right.
  \end{equation*}
  have a solution, then the control law $u(t)=Kz(t)+(\Gamma-K\Pi)w(t)$ solves  SFRP.
  \end{thm}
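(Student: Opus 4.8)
The plan is to reduce the closed-loop dynamics to a single decaying semigroup orbit by passing to the error coordinate $\tilde z(t):=z(t)-\Pi w(t)$, and then to read off the tracking error directly from the second regulator equation. The two regulator equations enter in exactly the two places one expects: the first to annihilate the forcing term in the $\tilde z$-dynamics, the second to identify $e(t)$ with $C\tilde z(t)$. First I would substitute the control law $u(t)=Kz(t)+Lw(t)$ with $L=\Gamma-K\Pi$ into the plant, obtaining the closed-loop state equation $\dot z(t)=(A+BK)z(t)+(BL+P)w(t)$ with $w(t)=T_S(t)w_0$, whose mild solution is
\begin{equation*}
z(t)=T_{A+BK}(t)z_0+\int_0^t T_{A+BK}(t-\sigma)(BL+P)T_S(\sigma)w_0\,d\sigma.
\end{equation*}
The key algebraic observation is that the first regulator equation $A\Pi+B\Gamma+P=\Pi S$ can be rewritten, by adding and subtracting $BK\Pi$, as $\Pi S=(A+BK)\Pi+\bigl(B(\Gamma-K\Pi)+P\bigr)=(A+BK)\Pi+(BL+P)$ on $D(S)$, which is precisely the Sylvester-type equation (\ref{op eq}) with generator $A+BK$ in place of $A$ and perturbation $\Delta:=BL+P\in\mathcal{L}(W,Z)$.

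Applying the preceding Lemma (with $A+BK$ and $\Delta=BL+P$) evaluates the convolution as $\int_0^t T_{A+BK}(t-\sigma)(BL+P)T_S(\sigma)w_0\,d\sigma=\Pi T_S(t)w_0-T_{A+BK}(t)\Pi w_0$, so the forcing term collapses and $z(t)=T_{A+BK}(t)(z_0-\Pi w_0)+\Pi T_S(t)w_0$, i.e. $\tilde z(t)=T_{A+BK}(t)\tilde z_0$ with $\tilde z_0:=z_0-\Pi w_0$. Using the second regulator equation $C\Pi=Q$, the error becomes $e(t)=Cz(t)-Qw(t)=C\tilde z(t)+(C\Pi-Q)w(t)=C\,T_{A+BK}(t)\tilde z_0$. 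Since a bounded polynomially stable $C_0$-semigroup is strongly stable (Section~2), $T_{A+BK}(t)\tilde z_0\to0$ for every $\tilde z_0\in Z$, and boundedness of $C$ gives $e(t)\to0$ for all $z_0\in Z$, $w_0\in W$.

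For the decay rate, suppose $z_0\in D(A)$ and $w_0\in D(S)$. Since $\Pi(D(S))\subset D(A)$ and $D(A+BK)=D(A)$ (as $BK$ is bounded), we have $\tilde z_0\in D(A+BK)$; because $\sigma(A+BK)\subset\mathbb{C}^-$, the operator $A+BK$ is boundedly invertible, and the normalized estimate (\ref{polywithD(A)}) yields
\begin{equation*}
\norm{T_{A+BK}(t)\tilde z_0}=\norm{T_{A+BK}(t)(A+BK)^{-1}(A+BK)\tilde z_0}\leq \frac{N}{t^{1/\alpha}}\norm{(A+BK)\tilde z_0},
\end{equation*}
whence $\norm{e(t)}_Y\leq\norm{C}\,N\,t^{-1/\alpha}\norm{(A+BK)\tilde z_0}=:m\,t^{-1/\alpha}$, as required.

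The step requiring the most care is the application of the Lemma: one must confirm that $\Pi(D(S))\subset D(A)$ (so that the rewritten Sylvester equation is meaningful and formula (\ref{lemmaprelim}) applies) and that the closed-loop \emph{mild} solution, rather than a classical one, is the correct object on which to evaluate the convolution. The remaining subtleties are bookkeeping: passing from the raw polynomial bound (\ref{polybound}) to the normalized form (\ref{polywithD(A)}), and invoking $\sigma(A+BK)\subset\mathbb{C}^-$ to justify the factor $(A+BK)^{-1}$ used to extract the rate.
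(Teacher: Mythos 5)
Your proof is correct and follows essentially the same route as the paper's: rewrite the first regulator equation as the closed-loop Sylvester equation $\Pi S=(A+BK)\Pi+(BL+P)$, apply the preceding lemma to collapse the convolution term so that $e(t)=CT_{A+BK}(t)(z_0-\Pi w_0)$ after using $C\Pi=Q$, and conclude by strong stability of the bounded polynomially stable closed-loop semigroup. The only cosmetic differences are that you derive the error formula from the mild solution directly where the paper cites \cite[Theorem 3.6]{Immonen}, and you fold the $t^{-1/\alpha}$ decay estimate (which the paper defers to the subsequent corollary, where the bound $\norm{\Pi w}_{A+BK}\leq C'\norm{w}_{S}$ is made explicit) into the main argument.
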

  \begin{proof}
  Since $A+BK$ generates the bounded polynomially stable $C_{0}$-semigroup $T_{A+BK}(t)$, we only need to verify the second condition of SFRP.
  Let $L=\Gamma-K\Pi\in\mathcal{L}(W,U)$. Then
  \begin{equation}\label{op eq tablized}
  \Pi S=(A+BK)\Pi+BL+P\quad in\,D(S).
  \end{equation}
  Hence, by Lemma \ref{lemmaprelim}, we have
  \begin{equation}\label{lemmaprelimwith K}
   \int_{0}^{t}T_{A+BK}(t-\sigma)(BL+P)T_{S}(\sigma)w_{0}d\sigma=\Pi T_{S}(t)w_{0}-T_{A+BK}(t)\Pi w_{0}
  \end{equation}
  for every $w_{0}\in\,W$ and every $t\geq0$.
  Now consider the feedback law $$u(t)=Kz(t)+Lw(t)$$ and let $z_{0}\in Z$ and $w_{0}\in W$ be arbitrary.
  Due to \cite[Theorem 3.6]{Immonen}, the explicit expression for the tracking error $e(t)$ is as follows
\begin{equation}\label{errorexpreswith(CPi-Q)}
e(t)=CT_{A+BK}(t)(z_{0}-\Pi w_{0})+(C\Pi-Q) T_{S}(t)w_{0}.
\end{equation}
By the second equation of \eqref{reg eq}, we have $C\Pi  =  Q \;in\; W$, then
\begin{equation}\label{errorexpres}
e(t)=CT_{A+BK}(t)(z_{0}-\Pi w_{0}),\qquad t\geq0.
\end{equation}
Since $T_{A+BK}(t)$ is polynomially stable and bounded then $T_{A+BK}(t)$ is strongly stable. Hence, by the boundedness of the operator $C$, we obtain that the tracking error $e(t)$ converges to $0$ as $t\longrightarrow \infty$ for all $z_{0}\in Z,\,w_{0}\in W.$
\end{proof}
 As an immediate consequence of the formula \eqref{errorexpres}, we can give the rate of decay of $\|e(t)\|_{Y}$ for  more regular  initial conditions.
 \begin{cor}
  Assume that $z_{0}\in D(A)$ and $w_{0}\in D(S)$. Then there is a positive constant $m$ depending only on $\|C\|_{\mathcal{L}(Z,Y)},\|z_{0}\|_{A}$ and $\|w_{0}\|_{S}$ such that
 \begin{equation}
 \|e(t)\|_{Y}\leq m\,t^{-\frac{1}{\alpha}},\quad\quad \forall t>0.
 \end{equation}\end{cor}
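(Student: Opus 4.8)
The plan is to start from the error formula \eqref{errorexpres}, namely
$e(t)=CT_{A+BK}(t)(z_{0}-\Pi w_{0})$ for all $t\geq0$, which holds because the regulator equations are satisfied.  Taking norms and using the boundedness of $C$ gives
$\|e(t)\|_{Y}\leq \|C\|_{\mathcal{L}(Z,Y)}\,\|T_{A+BK}(t)(z_{0}-\Pi w_{0})\|_{Z}$,
so the whole task reduces to estimating $\|T_{A+BK}(t)x_{0}\|_{Z}$ with $x_{0}:=z_{0}-\Pi w_{0}$ and exploiting the extra regularity of the initial data.

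First I would observe that the polynomial stability estimate \eqref{polywithD(A)} for the closed-loop semigroup $T_{A+BK}(t)$ reads
$\|T_{A+BK}(t)(A+BK)^{-1}\|\leq N\,t^{-1/\alpha}$ for all $t>0$ and some $\alpha>0$.  Hence for any $x_{0}\in D(A+BK)$ we may write $x_{0}=(A+BK)^{-1}(A+BK)x_{0}$ and obtain
\begin{equation*}
\|T_{A+BK}(t)x_{0}\|_{Z}\leq N\,t^{-1/\alpha}\,\|(A+BK)x_{0}\|_{Z}.
\end{equation*}
So the key step is to check that $x_{0}=z_{0}-\Pi w_{0}$ indeed lies in $D(A+BK)=D(A)$ (the two domains coincide since $B\in\mathcal{L}(U,Z)$ and $K\in\mathcal{L}(Z,U)$ so $BK$ is bounded), and to bound $\|(A+BK)x_{0}\|_{Z}$ in terms of $\|z_{0}\|_{A}$ and $\|w_{0}\|_{S}$.

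For this I would use the hypotheses $z_{0}\in D(A)$ and $w_{0}\in D(S)$.  The regulator equations force $\Pi(D(S))\subset D(A)$, so $\Pi w_{0}\in D(A)$ and therefore $x_{0}\in D(A)$.  Then
$(A+BK)x_{0}=(A+BK)z_{0}-(A+BK)\Pi w_{0}$, and using the first regulator equation $(A+BK)\Pi w_{0}+BLw_{0}+Pw_{0}=\Pi S w_{0}$ (equivalently \eqref{op eq tablized}) one can replace $(A+BK)\Pi w_{0}$ by $\Pi S w_{0}-BLw_{0}-Pw_{0}$, which is bounded by a constant times $\|w_{0}\|_{S}$ since $\Pi, B, L, P$ are bounded and $Sw_{0}$ is controlled by the graph norm $\|w_{0}\|_{S}$.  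Likewise $\|(A+BK)z_{0}\|_{Z}\leq \|Az_{0}\|_{Z}+\|BK\|\,\|z_{0}\|_{Z}$ is bounded by a constant times $\|z_{0}\|_{A}$.  Collecting these gives $\|(A+BK)x_{0}\|_{Z}\leq c\,(\|z_{0}\|_{A}+\|w_{0}\|_{S})$ for some constant $c$, and setting $m:=\|C\|_{\mathcal{L}(Z,Y)}\,N\,c\,(\|z_{0}\|_{A}+\|w_{0}\|_{S})$ yields the claimed bound $\|e(t)\|_{Y}\leq m\,t^{-1/\alpha}$.

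I expect the only mild subtlety to be the bookkeeping that $D(A+BK)=D(A)$ and that the normalization \eqref{polywithD(A)} (with $(A+BK)^{-1}$ in place of $A^{-1}$, valid because $0\in\rho(A+BK)$ by $\sigma(A+BK)\subset\mathbb{C}^{-}$) applies to the closed-loop generator; once these are in place the estimate is routine, so there is no serious obstacle beyond making the constant $m$ depend only on the quantities claimed.
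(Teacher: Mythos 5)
Your proposal is correct and follows essentially the same route as the paper: both start from the error formula \eqref{errorexpres}, apply the polynomial decay estimate \eqref{polywithD(A)} for $T_{A+BK}(t)$ to the initial datum $z_{0}-\Pi w_{0}\in D(A+BK)$, and use the stabilized Sylvester equation \eqref{op eq tablized} to show $\Pi$ maps $D(S)$ boundedly into $D(A+BK)$ so that the graph norm of $z_{0}-\Pi w_{0}$ is controlled by $\|z_{0}\|_{A}+\|w_{0}\|_{S}$. The only cosmetic difference is that you bound $\|(A+BK)x_{0}\|$ directly via $(A+BK)^{-1}$ while the paper works with the full graph norm $\|\cdot\|_{A+BK}$; these are equivalent here.
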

 \begin{proof}
 From the formula (\ref{errorexpres}), we have
 \begin{equation}\label{errorestimate}
 \|e(t)\|_{Y}\leq\|C\|_{\mathcal{L}(Z,Y)}\|T_{A+BK}(t)(z_{0}-\Pi w_{0})\|.
 \end{equation}
 Since $T_{A+BK}(t)$ is polynomially stable and $z_{0}-\Pi w_{0}\in D(A+BK)$ then, by inequality (\ref{polywithD(A)})
 \begin{equation}\label{semigrinequality}
 \|T_{A+BK}(t)(z_{0}-\Pi w_{0})\|\leq N t^{-\frac{1}{\alpha}}\|z_{0}-\Pi w_{0}\|_{A+BK}
 \end{equation}
 for all $t>0$. Remark that $\Pi\in\mathcal{L}(D(S),D(A+BK))$. In fact, if we put $\Delta:=BL+P$, then by (\ref{op eq tablized}), for every $w\in D(S)$, we have
 $$\begin{array}{ccl}
   \|\Pi w\|_{A+BK} & = & \|\Pi w\|+\|(A+BK)\Pi w\|\\ 
    & = & \|\Pi w\|+\|\Pi Sw-\Delta w\|\\ 
    & \leq & \|\Pi\|\|w\|+\|\Pi\|\|Sw\|+\|\Delta\|\|w\|\\ 
     & \leq & (\|\Pi\|+\|\Delta\|)\|w\|_{S}=C^{'}\|w\|_{S}\label{boundedPiD(A)D(S)}.
 \end{array}$$
Hence, from the inequalities (\ref{errorestimate}) and (\ref{semigrinequality}), we deduce that
  \begin{equation}
 \|e(t)\|_{Y}\leq\|C\|_{\mathcal{L}(Z,Y)}Nt^{-\frac{1}{\alpha}}(\|z_{0}\|_{A}+C^{'}\|w_{0}\|_{S})
 \end{equation} for all $t>0$.
 Thus
  \begin{equation*}
 \|e(t)\|_{Y}\leq m\,t^{-\frac{1}{\alpha}}
 \end{equation*}
 with
 \begin{equation*}
m=\|C\|_{\mathcal{L}(Z,Y)}N (\|z_{0}\|_{A}+C^{'}\|w_{0}\|_{S}).
 \end{equation*}
 \end{proof}
 \section{Necessary conditions for the solvability of SFRP}
 In this section, we shall discuss necessity of solvability of the regulator equations (\ref{reg eq}) for the solvability of the SFRP.
 To this end, we introduce a  concept of conform operators $\Delta\in \mathcal{L}(W,Z)$. This allows us to give a complete characterization of the solvability of the Sylvester type operator equation (\ref{op eq}). The solvability of the second regulator equation of (\ref{reg eq}) is subsequently obtained by imposing certain auxiliary conditions for the reference signals or the speed of output regulation. We emphasize that such conditions are the same imposed by Immonen in \cite{Immonen}.
 \begin{defn}
 Let A generate a polynomially stable $C_{0}$-semigroup $T_{A}(t)$. An operator $\Delta\in \mathcal{L}(W,Z)$ is said to be conform for the semigroup $T_{A}(t)$ if the operator $$\Pi\omega:=\int_{0}^{\infty}T_{A}(t)\Delta T_{S}(-t)w dt, \quad w\in D(S),
$$ 
define a linear bounded operator from $D(S)$ to $Z$, where $D(S)$ is endowed with the induced norm of $W$.
\end{defn}
 In the following lemma, we give a characterization of a conform operator via  Sylvester type operator equation (\ref{op eq}).
 \begin{lem}\label{characonform}
 Let $T_{A}(t)$ be a polynomially stable $C_{0}$-semigroup and let $\Delta\in \mathcal{L}(W,Z)$. The operator $\Delta$ is conform for $T_{A}(t)$ if and only if the operator equation $\Pi S=A\Pi+\Delta$ has a solution $\Pi\in \mathcal{L}(W,Z)$.
 \end{lem}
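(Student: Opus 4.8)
The plan is to prove the two implications of the equivalence separately, using the integral in \eqref{expression de pi} as the bridge between the two notions. Throughout I will exploit that $T_{S}(\cdot)$ is an isometric group, so that $\|T_{S}(\pm t)w\|=\|w\|$ and $ST_{S}(-t)w=T_{S}(-t)Sw$ for $w\in D(S)$, together with the decay estimate \eqref{polywithD(A)} for the polynomially stable semigroup $T_{A}(t)$ (recall that $0\in\rho(A)$ since $\sigma(A)\subset\mathbb{C}^{-}$).

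For the implication ``$\Pi S=A\Pi+\Delta$ solvable $\Rightarrow$ $\Delta$ conform'', I would start from the preliminary identity \eqref{lemmaprelim} and specialise it by replacing $w_{0}$ with $T_{S}(-t)w$ for a fixed $w\in D(S)$. After the change of variables $\sigma\mapsto t-\sigma$ and the group law $T_{S}(\sigma)T_{S}(-t)=T_{S}(\sigma-t)$, the left-hand side becomes $\int_{0}^{t}T_{A}(s)\Delta T_{S}(-s)w\,ds$, while the right-hand side collapses to $\Pi w-T_{A}(t)\Pi T_{S}(-t)w$. It then remains to let $t\to\infty$: since $\Pi$ solves \eqref{op eq}, for $w\in D(S)$ one has $A\Pi T_{S}(-t)w=\Pi T_{S}(-t)Sw-\Delta T_{S}(-t)w$, whose norm is bounded by $\|\Pi\|\,\|Sw\|+\|\Delta\|\,\|w\|$ uniformly in $t$ by the isometry of the group. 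Feeding this into \eqref{polywithD(A)} gives $\|T_{A}(t)\Pi T_{S}(-t)w\|\le Nt^{-1/\alpha}(\|\Pi\|\,\|Sw\|+\|\Delta\|\,\|w\|)\to 0$. Hence the integral \eqref{expression de pi} converges and equals $\Pi w$ for every $w\in D(S)$; since $\Pi$ is bounded on $W$, its restriction to $D(S)$ is bounded for the induced norm, which is precisely conformity.

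For the converse, suppose $\Delta$ is conform, so that $\Pi_{0}w:=\int_{0}^{\infty}T_{A}(t)\Delta T_{S}(-t)w\,dt$ is bounded from $D(S)$ (induced norm) into $Z$; since $D(S)$ is dense in $W$, $\Pi_{0}$ extends uniquely to $\Pi\in\mathcal{L}(W,Z)$. To recover \eqref{op eq} I would compute, for $w\in D(S)$ and $h>0$,
$$T_{A}(h)\Pi w=\int_{0}^{\infty}T_{A}(t+h)\Delta T_{S}(-t)w\,dt=\int_{h}^{\infty}T_{A}(s)\Delta T_{S}(-s)\big(T_{S}(h)w\big)\,ds,$$
using the substitution $s=t+h$ and $T_{S}(h)T_{S}(-s)=T_{S}(-s)T_{S}(h)$. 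Because $T_{S}(h)w\in D(S)$, the last integral equals $\Pi T_{S}(h)w-\int_{0}^{h}T_{A}(s)\Delta T_{S}(-s)T_{S}(h)w\,ds$, so that
$$\frac{T_{A}(h)\Pi w-\Pi w}{h}=\Pi\,\frac{T_{S}(h)w-w}{h}-\frac1h\int_{0}^{h}T_{A}(s)\Delta T_{S}(-s)T_{S}(h)w\,ds.$$
Letting $h\to 0^{+}$, the first term on the right tends to $\Pi Sw$ by boundedness of $\Pi$ and $w\in D(S)$, and the averaged integral tends to $\Delta w$ (splitting $T_{S}(h)w=w+(T_{S}(h)w-w)$ and using strong continuity of both semigroups). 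Consequently the left-hand limit exists, which shows $\Pi w\in D(A)$ and $A\Pi w=\Pi Sw-\Delta w$, i.e. \eqref{op eq} holds.

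I expect the main obstacle to be the converse direction, specifically justifying that $\Pi w\in D(A)$: everything hinges on the fact that the difference quotient $h^{-1}(T_{A}(h)\Pi w-\Pi w)$ actually converges, which is established only indirectly, by proving that the right-hand side converges. Care is needed with the $h$-dependence inside the averaged integral and with interchanging $T_{A}(h)$ with the improper integral defining $\Pi$. In the forward direction the only delicate point is the uniform-in-$t$ bound on the graph norm of $\Pi T_{S}(-t)w$, which is exactly where the operator equation \eqref{op eq} and the isometry of $T_{S}$ enter.
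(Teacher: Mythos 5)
Your proof is correct and follows essentially the same route as the paper: the forward direction specializes the identity \eqref{lemmaprelim} to $w_{0}=T_{S}(-t)w$ and kills the boundary term $T_{A}(t)\Pi T_{S}(-t)w$ via polynomial decay, while the converse computes the difference quotient of $T_{A}(h)\Pi w$ by exactly the same splitting of the improper integral. The only difference is cosmetic: you spell out the uniform graph-norm bound $\|A\Pi T_{S}(-t)w\|\leq\|\Pi\|\,\|Sw\|+\|\Delta\|\,\|w\|$ coming from the Sylvester equation and the isometry of $T_{S}$, which the paper leaves as ``easy to see''.
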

 \begin{proof}
 Let $\Pi\in \mathcal{L}(W,Z)$ such that $\Pi(D(S))\subset D(A)$ and
 \begin{equation*}
 \Pi S=A\Pi+\Delta \;in \;D(S).
 \end{equation*}
 From Lemma \ref{lemmaprelim}, we have
 \begin{equation}\label{expressPiwith sigma}
 \Pi w=T_{A}(\sigma)\Pi T_{S}(-\sigma)w+\int_{0}^{\sigma}T_{A}(t)\Delta T_{S}(-t)wdt
 \end{equation}
 for all $w\in W$ and $\sigma\geq0$. Since $\Pi(D(S))\subset D(A)$ then, using the polynomial stability of $T_{A}(t)$, it is easy to see that the first term in the right hand side of the equality (\ref{expressPiwith sigma}) converges to $0$ as $\sigma\rightarrow\infty$ for all $w\in D(S)$. Consequently, we have
 \begin{equation}
 \Pi w=\int_{0}^{\infty}T_{A}(t)\Delta T_{S}(-t)w dt.
 \end{equation}
 for all $w\in D(S)$. Conversely,  suppose that $\Delta\in \mathcal{L}(W,Z)$ is a conform operator for the semigroup $T_{A}(t)$. Then $$\Pi w:=\int_{0}^{\infty}T_{A}(t)\Delta T_{S}(-t)w dt$$ define a bounded linear operator from $D(S)$ to $Z$.
 By density argument, $\Pi$ can be extended to a linear bounded operator from $W$ to $Z$ noted also by $\Pi$.
 Let $w\in D(S)$ and $t>0$ be arbitrary. We have
 $$\begin{array}{ccl}
 T_{A}(t)\Pi w-\Pi w&=& \int_{0}^{\infty}T_{A}(t+\sigma)\Delta T_{S}(-\sigma)wd\sigma-\int_{0}^{\infty}T_{A}(\sigma)\Delta T_{S}(-\sigma)wd\sigma\\
&&\\
 &=& \int_{t}^{\infty}T_{A}(\sigma)\Delta T_{S}(t-\sigma)wd\sigma-\int_{0}^{\infty}T_{A}(\sigma)\Delta T_{S}(-\sigma)wd\sigma\\
 &&\\
 &=& \int_{0}^{\infty}T_{A}(\sigma)\Delta T_{S}(-\sigma)(T_{S}(t)w-w)d\sigma-\int_{0}^{t}T_{A}(\sigma)\Delta T_{S}(t-\sigma)wd\sigma.
 \end{array}$$
 Hence, for all $t\in(0,1)$, we have
 $$\begin{array}{ccl}
 \frac{T_{A}(t)\Pi w-\Pi w}{t}&=&\int_{0}^{\infty}T_{A}(\sigma)\Delta T_{S}(-\sigma)(\frac{T_{S}(t)w-w}{t})d\sigma-
 \frac{1}{t}\int_{0}^{t}T_{A}(\sigma)\Delta T_{S}(t-\sigma)wd\sigma\\
 &&\\
 &=&\Pi(\frac{T_{S}(t)w-w}{t})-\frac{1}{t}\int_{0}^{t}T_{A}(\sigma)\Delta T_{S}(t-\sigma)w d\sigma.
 \end{array}$$
 In the first term on the right hand side, we use the fact that $\frac{T_{S}(t)w-w}{t}\in D(S)$ for every $w\in D(S)$.
Since
 $\displaystyle\lim_{t\longrightarrow0^{+}}\frac{T_{S}(t)w-w}{t}=Sw$ and $\Pi\in\mathcal{L}(W,Z)$, then this first term converges to $\Pi Sw$ as $t\longrightarrow0^{+}$. From another hand, since
  the map \,$\sigma\longrightarrow T_{A}(\sigma)\Delta T_{S}(t-\sigma)w$ \,is continuous on $\mathbb{R}^{+}$ for every $t\in (0,1)$ and
 $T_{S}(t)$ is an isometric group on $W$ then the second term converges to $-\Delta w$. Therefore, $$\lim_{t\longrightarrow0^{+}}\frac{T_{A}(t)\Pi w-\Pi w}{t}\; \mbox{exists in Z}.$$
 Consequently,  $\Pi w\in D(A)$ and $A\Pi w=\Pi Sw-\Delta w$.
 \end{proof}
 \begin{rem}
 if $T_{A}(t)$ is exponentially stable, then by Corollary $8$ in \cite{Phong}, the operator equation $\Pi S=A\Pi+\Delta$ in $D(S)$ has a (unique) solution
 $\Pi\in \mathcal{L}(W,Z)$ for every $\Delta\in\mathcal{L}(W,Z)$ defined by the same formula
 $$\Pi w=\int_{0}^{\infty}T_{A}(t)\Delta T_{S}(-t)w dt $$ for all $w\in W$,
 that is  every operator $\Delta\in\mathcal{L}(W,Z)$ is conform for an exponentially stable $C_{0}$-semigroup.
 \end{rem}
 In the following proposition, we  give a sufficient condition of  the conformity of an  operator with a   polynomially stable $C_{0}$-semigroup.
 \begin{prop}\label{suffcondpoly}
 Let A generate a bounded polynomially stable $C_{0}$-semigroup $T_{A}(t)$ and let $\Delta\in\mathcal{L}(W,Z)$. If there exists $\varepsilon>0$ such that $\Delta\in\mathcal{L}(W,D((-A)^{\alpha+\varepsilon}))$, then the operator $\Delta$ is conform for the semigroup $T_{A}(t)$.
 \end{prop}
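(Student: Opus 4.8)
The plan is to verify directly that the integral defining $\Pi$ converges and yields a bounded operator, by extracting enough polynomial decay from the fractional smoothing hypothesis on $\Delta$. First I would record that the hypothesis $\Delta\in\mathcal{L}(W,D((-A)^{\alpha+\varepsilon}))$ means precisely that $(-A)^{\alpha+\varepsilon}\Delta\in\mathcal{L}(W,Z)$; set $M_\Delta:=\|(-A)^{\alpha+\varepsilon}\Delta\|$. Then for each $t>0$ and each $w\in W$ I would insert a factored identity on the range of $\Delta$, writing
$$T_A(t)\Delta T_S(-t)w=T_A(t)(-A)^{-(\alpha+\varepsilon)}\,(-A)^{\alpha+\varepsilon}\Delta\,T_S(-t)w,$$
which is legitimate because $\Delta T_S(-t)w\in D((-A)^{\alpha+\varepsilon})$ and $(-A)^{-(\alpha+\varepsilon)}$ is the bounded inverse of $(-A)^{\alpha+\varepsilon}$. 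Using that $T_S$ is isometric, so that $\|T_S(-t)w\|_W=\|w\|_W$, this yields the pointwise bound $\|T_A(t)\Delta T_S(-t)w\|\le M_\Delta\,\|w\|_W\,\|T_A(t)(-A)^{-(\alpha+\varepsilon)}\|$.

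The crux is then to show that $g(t):=\|T_A(t)(-A)^{-(\alpha+\varepsilon)}\|$ is integrable on $(0,\infty)$. The normalized polynomial-stability estimate (\ref{polywithZalpha}) only gives the borderline non-integrable rate $\|T_A(t)(-A)^{-\alpha}\|\le N/t$, so the extra exponent $\varepsilon$ is exactly what must buy integrability at infinity. This is where I would invoke the scaling property (\ref{polywithgamma}): reading (\ref{polywithZalpha}) as polynomial stability of exponent $\alpha$ and rate $1$ for the (bounded) semigroup, (\ref{polywithgamma}) applied with $\gamma=1+\varepsilon/\alpha\ (\ge 1)$ gives $\|T_A(t)(-A)^{-(\alpha+\varepsilon)}\|\le N(\gamma)\,t^{-(1+\varepsilon/\alpha)}$, whose integral over $[1,\infty)$ is finite because $1+\varepsilon/\alpha>1$. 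For the small-time part I would simply use boundedness of the semigroup, $g(t)\le M\,\|(-A)^{-(\alpha+\varepsilon)}\|$ on $(0,1]$, which is integrable there. Hence $K:=\int_0^\infty g(t)\,dt<\infty$.

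Combining, I would conclude $\|\Pi w\|\le M_\Delta K\,\|w\|_W$ for every $w$, so the integral converges absolutely and $\Pi$ is bounded from $D(S)$ (with the induced $W$-norm) into $Z$; by definition $\Delta$ is conform. I expect the only genuine obstacle to be the integrability of $g$ at infinity: naively splitting $(-A)^{-(\alpha+\varepsilon)}=(-A)^{-\alpha}(-A)^{-\varepsilon}$ and using (\ref{polywithZalpha}) merely reproduces the rate $1/t$, since the bounded factor $(-A)^{-\varepsilon}$ contributes no additional decay, so one truly needs the improved decay of the scaling estimate rather than a bare product bound. A minor point to check is that the scaling result, stated in the excerpt for the powers $A_\mu^{-\alpha\gamma}$, transfers to the normalized powers $(-A)^{-\alpha\gamma}$ used here, which is precisely the normalization already carried out in passing from (\ref{polybound}) to (\ref{polywithZalpha}).
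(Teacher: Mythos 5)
Your proof is correct and follows essentially the same route as the paper: both obtain the integrable decay rate $t^{-(1+\varepsilon/\alpha)}$ from the scaling estimate (\ref{polywithgamma}) with $\gamma=1+\varepsilon/\alpha$, apply it to $\Delta T_{S}(-t)w\in Z_{\alpha+\varepsilon}$, and use the isometry of $T_{S}$ to get $\|T_{A}(t)\Delta T_{S}(-t)w\|\leq a\,N_{\varepsilon}\,t^{-1-\varepsilon/\alpha}\|w\|$. You are in fact slightly more careful than the paper, which infers integrability on all of $[0,\infty)$ from this bound alone (not integrable at $0$) and omits the small-time estimate via boundedness of the semigroup that you explicitly supply.
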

  \begin{proof}
 Since $T_{A}(t)$ is a bounded polynomially stable $C_{0}$-semigroup then, by inequality (\ref{polywithgamma}), with $\gamma=1+\frac{\varepsilon}{\alpha}$ and $\beta=1$, we have
 \begin{equation*}
  \| T_{A}(t)(-A)^{-\alpha-\varepsilon}\|\leq N_{\varepsilon}\,t^{-1-\frac{\varepsilon}{\alpha}},
 \end{equation*}
 or equivalently
 \begin{equation}
  \| T_{A}(t)z\|\leq N_{\varepsilon}\,t^{-1-\frac{\varepsilon}{\alpha}}\|z\|_{\alpha+\varepsilon}
 \end{equation}
 for all $z\in Z_{\alpha+\varepsilon}:=D((-A)^{\alpha+\varepsilon})$
 Let $a>0$ such that $\|\Delta w\|_{\alpha+\varepsilon}\leq a\,\|w\|$ for all $w\in D(S)$. Since $\Delta T_{S}(-t)w\in Z_{\alpha+\varepsilon}$, then
 $$\begin{array}{lll}\vspace{0.5cm}
   \|T_{A}(t)\Delta T_{S}(-t)w\| & \leq & N_{\varepsilon}\,t^{-1-\frac{\varepsilon}{\alpha}}\|\Delta T_{S}(-t)w\|_{\alpha+\varepsilon} \\\vspace{0.5cm}
    & \leq & a\,N_{\varepsilon}\,t^{-1-\frac{\varepsilon}{\alpha}}\|T_{S}(-t)w\| \\\vspace{0.5cm}
    & = & a\,N_{\varepsilon}\,t^{-1-\frac{\varepsilon}{\alpha}}\|w\|.
 \end{array}
$$ 
Thus, the function $t\longrightarrow T_{A}(t)\Delta T_{S}(-t)w$ is integrable on $[0,\infty).$ Consequently, 
  $$
\Pi w=\int_{0}^{\infty}T_{A}(t)\Delta T_{S}(-t)wdt
$$ define a bounded linear operator from $W$ to $Z$, and  the operator $\Delta$ is conform for the semigroup $T_{A}(t)$.
  \end{proof}
 One way to obtain the necessity of solvability of the regulator equations (\ref{reg eq}) for the solvability of the SFRP is to restrict the class of reference signals as follows (see \cite{Immonen}).
 \begin{defn}\cite{Immonen}
 The exogenous system generates admissible reference signals if for every $w\in W$ and each $Q\in\mathcal{L}(W,Y)$ we have:
  $QT_{S}(\cdot)w\in C_{0}^{+}(\mathbb{R},Y)$ only if $Qw=0$.
 \end{defn}
 This is the case if, for example, the operator $S$ in (\ref{exo}) generates a periodic $C_{0}$-group $T_{S}(t)$ on $W$. More generally, one has the following result.
 \begin{prop}\cite{Immonen}
 The exogenous system (\ref{exo}) generates admissible reference signals provided that at least one of the four conditions below holds:
 \begin{itemize}
 \item[$1$.] The reference signals $QT_{S}(\cdot)w$ are in $AP(\mathbb{R},Y)$ for all $Q\in \mathcal{L}(W,Y)$ and all $w\in W$.
 \item[$2$.] The spectrum $\sigma(S)$ is countable and $H$ does not contain a closed subspace which is isomorphic to $c_{0}$ (the space of
 sequences converging to $0$ with sup-norm).
 \item[$3$.] The spectrum $\sigma(S)$ is discrete.
 \item[$4$.] The space $W$ is of finite-dimensional.
 \end{itemize}
 \end{prop}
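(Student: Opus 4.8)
The plan is to reduce all four conditions to a single elementary fact about forward-vanishing almost periodic functions. Recall that admissibility means precisely the implication: writing $f(t):=QT_{S}(t)w$, if $f\in C_{0}^{+}(\mathbb{R},Y)$ (that is, $\|f(t)\|_{Y}\to0$ as $t\to+\infty$), then $f(0)=Qw=0$. Since $T_{S}(\cdot)$ is an isometric $C_{0}$-group, the orbit $t\mapsto T_{S}(t)w$ has constant norm $\|w\|$ and is uniformly continuous, so in every case $f\in BUC(\mathbb{R},Y)$. The key auxiliary fact I would isolate first is: \emph{if $g\in AP(\mathbb{R},Y)$ and $g(t)\to0$ as $t\to+\infty$, then $g\equiv0$}. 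Its proof is two lines: almost periodicity furnishes a sequence of $\varepsilon$-periods $\tau_{n}\to+\infty$ with $\sup_{t}\|g(t+\tau_{n})-g(t)\|\le\varepsilon$; for fixed $t$ one estimates $\|g(t)\|\le\varepsilon+\|g(t+\tau_{n})\|$, lets $n\to\infty$ so that $t+\tau_{n}\to+\infty$, and then lets $\varepsilon\to0$. Thus once $f$ is shown to be almost periodic, decay at $+\infty$ forces $f\equiv0$, and in particular $Qw=0$, which is exactly admissibility.

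With this fact in hand, Condition~$1$ is immediate, since it hypothesizes $f\in AP(\mathbb{R},Y)$ directly. Condition~$4$ reduces to Condition~$1$: when $W$ is finite-dimensional the generator $S$ is a bounded operator generating a bounded isometric group, and boundedness of $e^{tS}$ excludes Jordan blocks and confines $\sigma(S)$ to $i\mathbb{R}$, so $S$ is diagonalizable with purely imaginary eigenvalues $i\lambda_{1},\dots,i\lambda_{k}$. Hence $T_{S}(t)w=\sum_{j}e^{i\lambda_{j}t}w_{j}$ is a trigonometric polynomial and $f=QT_{S}(\cdot)w$ is almost periodic.

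The substance lies in Conditions~$2$ and~$3$, where almost periodicity must be extracted from spectral information. The route I would take is a Loomis-type theorem: for $f\in BUC(\mathbb{R},Y)$ (or for the $W$-valued orbit $T_{S}(\cdot)w$) the Carleman spectrum is contained in the real set $\{\eta:\,i\eta\in\sigma(S)\}$, and hence inherits countability from Condition~$2$ and discreteness from Condition~$3$. Under Condition~$2$, the hypothesis that the exosystem space (written $H$ there, i.e. $W$) contains no closed copy of $c_{0}$ is exactly what is needed to invoke the Banach-space extension of Loomis' theorem (Arendt--Batty / Basit): a bounded uniformly continuous function with countable spectrum, valued in a space without $c_{0}$, is almost periodic. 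Applying this to $T_{S}(\cdot)w$ in $W$ and then composing with the bounded operator $Q$ yields $f\in AP(\mathbb{R},Y)$, whereupon the vanishing fact closes the case. Under Condition~$3$, discreteness of $\sigma(S)$ supplies Riesz spectral projections about each isolated point, which decompose the orbit into purely oscillatory eigenmodes; this yields almost periodicity (again reducing to the vanishing fact), and discreteness also places us a fortiori in the countable-spectrum regime.

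The main obstacle is precisely the harmonic-analytic input behind Conditions~$2$ and~$3$: verifying that the spectrum of the orbit is governed by $\sigma(S)$, and then invoking the correct Loomis-type almost periodicity theorem, for which the ``no $c_{0}$'' hypothesis is indispensable (Loomis' theorem is known to fail for general Banach-space-valued functions). Everything else --- the vanishing fact, Condition~$1$, and the finite-dimensional reduction of Condition~$4$ --- is elementary. I would therefore organize the proof as: (i) prove the vanishing fact for forward-decaying almost periodic functions; (ii) dispatch Conditions~$1$ and~$4$; and (iii) handle Conditions~$2$ and~$3$ by the spectral reduction to almost periodicity via the cited Loomis-type results.
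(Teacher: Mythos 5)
The paper does not actually prove this proposition; it is quoted verbatim from \cite{Immonen}, so there is no in-text argument to compare against. Measured against the standard proof (the one in the cited thesis), your architecture is the right one and three of the four cases are sound. The vanishing lemma for forward-decaying almost periodic functions is correct as you prove it, and it immediately dispatches Condition~1. Condition~4 works either by your diagonalization of a bounded finite-dimensional group or, more economically, as a special case of Condition~2 (a finite-dimensional space contains no copy of $c_{0}$ and $\sigma(S)$ is finite). For Condition~2 your chain is exactly the intended one: the Carleman/Beurling spectrum of the orbit $t\mapsto T_{S}(t)w$ is contained in $\{\eta:\ i\eta\in\sigma(S)\}$ (via analytic continuation of $\lambda\mapsto R(\lambda,S)w$), countability passes to the orbit spectrum, the vector-valued Loomis theorem (valid precisely because $W$ contains no closed copy of $c_{0}$; the symbol $H$ in the statement is a typo for $W$) gives almost periodicity of the orbit, and composing with the bounded $Q$ plus the vanishing lemma gives $Qw=0$.

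The gap is in Condition~3. ``Riesz spectral projections about each isolated point decompose the orbit into purely oscillatory eigenmodes'' is not a valid step when $\sigma(S)$ is an infinite discrete set: in a general Banach space the family of Riesz projections attached to the isolated spectral points need not sum strongly to the identity, so a general $w$ need not admit a (norm-convergent) eigenmode expansion, and no bound on the partial sums is available. Your fallback --- that discreteness implies countability, placing you ``a fortiori in the countable-spectrum regime'' --- does not close the case either, because Condition~3 carries no hypothesis excluding $c_{0}\subset W$, and the vector-valued Loomis theorem genuinely fails without it (the standard counterexample lives in $c_{0}$ and has countable, non-discrete spectrum). What is actually needed here is the separate harmonic-analytic theorem that a bounded uniformly continuous function whose spectrum is discrete (no finite accumulation points) is almost periodic in \emph{any} Banach space; this stronger Tauberian input is precisely the reason Condition~3 is listed independently of Condition~2 rather than as a special case of it. With that theorem invoked in place of the projection argument, the rest of your proof goes through unchanged.
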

 The following theorem presents some conditions under which the solvability of the regulator equations (\ref{reg eq}) is
 necessary for the solvability of the SFRP.
 \begin{thm}
 Assume that the exosystem (\ref{exo}) generates admissible reference signals. If the SFRP is solvable for some control law $u(t)=Kz(t)+Lw(t)$ such that the operator $BL+P\in \mathcal{L}(W,Z)$ is conform for the semigroup $T_{A+BK}(t)$, then there exits $\Pi\in\mathcal{L}(W,Z)$ and
 $\Gamma\in\mathcal{L}(W,U)$ such that $\Pi(D(S))\subset D(A)$, $L=\Gamma-K\Pi$ and the regulator equations (\ref{reg eq}) are satisfied.
 \end{thm}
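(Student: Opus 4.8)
The plan is to construct the operator $\Pi$ from the conformity hypothesis, read off $\Gamma$ and the first regulator equation by a purely algebraic rearrangement, and then extract the second regulator equation $C\Pi=Q$ from the decay of the tracking error combined with the admissibility assumption.

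First I would exploit the conformity of $BL+P$. Since the SFRP is solvable by $u(t)=Kz(t)+Lw(t)$, the operator $A+BK$ generates a bounded polynomially stable $C_{0}$-semigroup $T_{A+BK}(t)$, and $D(A+BK)=D(A)$ because $BK\in\mathcal{L}(Z)$ is a bounded perturbation. Applying Lemma \ref{characonform} with $A$ replaced by $A+BK$ and $\Delta=BL+P$ then furnishes an operator $\Pi\in\mathcal{L}(W,Z)$ with $\Pi(D(S))\subset D(A+BK)=D(A)$ satisfying the closed-loop Sylvester equation $\Pi S=(A+BK)\Pi+BL+P$ in $D(S)$. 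Splitting $(A+BK)\Pi=A\Pi+BK\Pi$ and setting $\Gamma:=K\Pi+L\in\mathcal{L}(W,U)$, this identity becomes $\Pi S=A\Pi+B\Gamma+P$ in $D(S)$, which is exactly the first equation of \eqref{reg eq}, and by construction $L=\Gamma-K\Pi$.

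The remaining and most delicate step is the second regulator equation $C\Pi=Q$. Because $\Pi$ now solves the closed-loop Sylvester equation, Lemma \ref{lemmaprelim} applies to the closed-loop data $(A+BK,\,BL+P)$ and, exactly as in the sufficiency proof, yields the error representation $e(t)=CT_{A+BK}(t)(z_{0}-\Pi w_{0})+(C\Pi-Q)T_{S}(t)w_{0}$ for all $t\geq0$ and all $z_{0}\in Z$, $w_{0}\in W$. Since $T_{A+BK}(t)$ is bounded and polynomially stable, it is strongly stable, so the first summand tends to $0$ as $t\longrightarrow\infty$; as SFRP solvability gives $e(t)\longrightarrow0$, the second summand satisfies $(C\Pi-Q)T_{S}(t)w_{0}\longrightarrow0$ as $t\longrightarrow\infty$ for every $w_{0}\in W$.

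Finally I would convert this decay into the desired algebraic identity. Setting $\widetilde{Q}:=C\Pi-Q\in\mathcal{L}(W,Y)$, the previous step shows $\widetilde{Q}T_{S}(\cdot)w\in C_{0}^{+}(\mathbb{R},Y)$ for every $w\in W$. The admissibility hypothesis, applied to the operator $\widetilde{Q}$, then forces $\widetilde{Q}w=0$ for all $w\in W$, i.e. $C\Pi=Q$ in $W$, completing the verification of \eqref{reg eq}. I expect the main obstacle to lie in justifying the error representation for the $\Pi$ just constructed (rather than for a postulated regulator solution) and in isolating the exosystem term cleanly; the admissibility definition is tailored precisely to turn the vanishing of that term into $C\Pi=Q$.
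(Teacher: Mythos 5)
Your proposal is correct and follows essentially the same route as the paper: conformity of $BL+P$ yields $\Pi$ solving the closed-loop Sylvester equation via Lemma \ref{characonform}, $\Gamma=L+K\Pi$ gives the first regulator equation, and admissibility applied to $C\Pi-Q$ gives the second. The only (immaterial) difference is that the paper chooses the initial condition $z_{0}=\Pi w_{0}$ so that the term $CT_{A+BK}(t)(z_{0}-\Pi w_{0})$ vanishes identically, whereas you keep $z_{0}$ arbitrary and dispose of that term asymptotically via the strong stability of the bounded polynomially stable semigroup $T_{A+BK}(t)$.
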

 \begin{proof}
 Since $BL+P$ is conform for $T_{A+BK}(t)$, by Lemma (\ref{suffcondpoly}), there exists $\Pi\in\mathcal{L}(W,Z)$  such that $\Pi(D(S))\subset D(A+BK)$ and
 \begin{equation*}
 \Pi S=(A+BK)\Pi+BL+P \quad {\mbox in}\;D(S).
 \end{equation*}
 Let $\Gamma=L+K\Pi\in \mathcal{L}(W,U)$. Then $\Pi$ and $\Gamma$ solve the first regulator equation of (\ref{reg eq}).
 Next, we show that also the second regulator equation is satisfied.
 By Lemma \ref{lemmaprelim}, we have
 $$\int_{0}^{t}T_{A+BK}(t-\sigma)(BL+P)T_{S}(\sigma)wd\sigma=\Pi T_{S}(t)w-T_{A+BK}(t)\Pi w$$ for all $w\in W$.
 Let $w_{0}\in W$ be arbitrary and take $z_{0}=\Pi w_{0}\in Z$. Then the corresponding tracking error $e(t)$, see (\ref{errorexpreswith(CPi-Q)}), is given by
 $$\begin{array}{ccl}
 e(t)&=&CT_{A+BK}(t)(z_{0}-\Pi w_{0})+(C\Pi-Q)T_{S}(t)w_{0}\\
 &=&(C\Pi-Q)T_{S}(t)w_{0}.
 \end{array}$$
 Now $(C\Pi-Q)T_{S}(\cdot)w_{0}\in C_{0}^{+}(\mathbb{R},Y)$ because the SFRP is solvable. Since the exosystem \eqref{exo} generates admissible reference signals and since $C\Pi-Q\in \mathcal{L}(W,Y)$, we must have $$C\Pi w_{0}-Qw_{0}=0.$$ \end{proof}
 By combining the above result with those in Section \ref{section suffcond}, we obtain the following complete characterization for the solvability of
  the regulator equations \eqref{reg eq} and the SFRP.  \begin{thm}\label{charac of sfrp}
  Let $T_{A}(t)$ be a bounded polynomially stable $C_{0}$-semigroup and assume that the exosystem (\ref{exo}) generates admissible reference signals. Then the SFRP is solvable using the control law $u(t)=Kz(t)+Lw(t)$, where $L\in\mathcal{L}(W,U)$ and $BL+P$ is conform for $T_{A+BK}(t)$, if and only if there exist $\Pi\in\mathcal{L}(W,Z)$ and $\Gamma\in \mathcal{L}(W,U)$ such that $\Pi(D(S))\subset D(A)$, $L=\Gamma -K\Pi$ and the regulator equations (\ref{reg eq}) are satisfied.\\
  \end{thm}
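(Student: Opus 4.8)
The plan is to read Theorem \ref{charac of sfrp} as nothing more than the conjunction of the two directions already established, namely the main theorem of Section \ref{section suffcond} and the preceding necessity theorem, so that the work reduces to assembling them and checking that the conformity clause is consistent across both implications. Throughout I would make explicit at the outset that the relevant semigroup is $T_{A+BK}(t)$, assumed bounded and polynomially stable (the standing polynomial stabilizability hypothesis), so that both cited results apply verbatim; note also that since $B\in\mathcal{L}(U,Z)$ and $K\in\mathcal{L}(Z,U)$ the operator $BK$ is bounded, whence $D(A+BK)=D(A)$ and the inclusions $\Pi(D(S))\subset D(A+BK)$ and $\Pi(D(S))\subset D(A)$ coincide.

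For the necessity implication ($\Rightarrow$), I would observe that this is exactly the content of the preceding theorem of this section: assuming the SFRP is solved by $u(t)=Kz(t)+Lw(t)$ with $BL+P$ conform for $T_{A+BK}(t)$, and that the exosystem generates admissible reference signals, that theorem produces $\Pi\in\mathcal{L}(W,Z)$ with $\Pi(D(S))\subset D(A+BK)=D(A)$ and $\Gamma=L+K\Pi\in\mathcal{L}(W,U)$ satisfying both regulator equations, together with the relation $L=\Gamma-K\Pi$. Nothing further is required for this direction.

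For the sufficiency implication ($\Leftarrow$), I would start from a pair $\Pi,\Gamma$ solving \eqref{reg eq} with $L=\Gamma-K\Pi$. The key preliminary step is to verify that the resulting control law is automatically of the admissible form, i.e. that $BL+P$ is conform for $T_{A+BK}(t)$. Substituting $\Gamma=L+K\Pi$ into the first equation of \eqref{reg eq} gives $(A+BK)\Pi+BL+P=\Pi S$ in $D(S)$, so $\Pi$ solves the Sylvester type equation $\Pi S=(A+BK)\Pi+\Delta$ with $\Delta:=BL+P$; applying Lemma \ref{characonform} to the generator $A+BK$ then yields that $BL+P$ is conform for $T_{A+BK}(t)$. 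Having matched the form required in the statement, the conclusion that this control law solves the SFRP is precisely the main theorem of Section \ref{section suffcond}.

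The only delicate point, rather than a genuine obstacle, is the bookkeeping around the conformity clause: the characterization restricts attention to control laws for which $BL+P$ is conform, and one must check that this restriction is harmless. In the necessity direction conformity is an input, used to produce $\Pi$ via the preceding theorem; in the sufficiency direction it is not an extra assumption but is forced by the regulator equations through Lemma \ref{characonform}, as shown above. Making this symmetry explicit is what ties the two halves into a single equivalence.
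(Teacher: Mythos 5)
Your proposal is correct and follows essentially the same route as the paper, which gives no separate argument for Theorem \ref{charac of sfrp} beyond the remark that it is obtained ``by combining'' the sufficiency theorem of Section \ref{section suffcond} with the preceding necessity theorem. Your additional observation that in the sufficiency direction the conformity of $BL+P$ for $T_{A+BK}(t)$ is not an extra hypothesis but follows from the regulator equations via Lemma \ref{characonform} is a worthwhile clarification of a point the paper leaves implicit, but it does not change the overall argument.
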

  Finally, if $A+BK$ generates a bounded polynomially stable $C_{0}$-semigroup and if we can solve the SFRP in such way that output regulation
   is polynomially fast, then we can dispense with the assumption that the exogenous system (\ref{exo}) only generates admissible reference signals.
   We arrive at another complete characterization for the solvability of SFRP.
\begin{thm}
  Assume that $A+BK$ generates a bounded polynomially stable $C_{0}$-semigroup $T_{A+BK}(t)$, then there exists $L\in\mathcal{L}(W,U)$ such that $BL+P$ is conform for the semigroup $T_{A+BK}(t)$ and such that the control law $u(t)=Kz(t)+Lw(t)$ solves the SFRP with
  \begin{equation}\label{ineqerror}
  \|e(t)\|\leq Mt^{-\beta}[\|z_{0}\|_{A}+\|w_{0}\|_{S}]
  \end{equation}
  for all $t>0$ and some $\beta,M>0$,   not depending on the initial conditions $z_{0}\in D(A)$ and $w_{0}\in D(S)$, if and only if $L=\Gamma -K\Pi$ where $\Pi\in\mathcal{L}(W,Z)$ and $\Gamma\in \mathcal{L}(W,U)$ satisfy the regulator equations (\ref{reg eq}).
  \end{thm}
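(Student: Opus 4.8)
The plan is to prove the two implications separately, deriving the ``if'' part from the sufficiency material of Section~\ref{section suffcond} and the ``only if'' part from a shift argument that exploits the group structure of $T_S$ to dispense with the admissibility hypothesis used in the previous theorems.

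For the implication assuming $L=\Gamma-K\Pi$ with $(\Pi,\Gamma)$ solving the regulator equations~\eqref{reg eq}, I would substitute $L=\Gamma-K\Pi$ into $BL+P$ and use the first regulator equation to rewrite it as the Sylvester identity $\Pi S=(A+BK)\Pi+(BL+P)$ in $D(S)$. Applying Lemma~\ref{characonform} to the generator $A+BK$ with $\Delta:=BL+P$, the existence of this bounded solution $\Pi$ is exactly the conformity of $BL+P$ for $T_{A+BK}(t)$. The sufficiency theorem of Section~\ref{section suffcond} and its corollary then give simultaneously that $u(t)=Kz(t)+Lw(t)$ solves the SFRP and that $\|e(t)\|\le m\,t^{-1/\alpha}$, which is of the form~\eqref{ineqerror} with $\beta=1/\alpha$.

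For the converse, suppose such an $L$ exists. Conformity of $BL+P$ and Lemma~\ref{characonform} furnish $\Pi\in\mathcal{L}(W,Z)$ with $\Pi(D(S))\subset D(A+BK)=D(A)$ solving $\Pi S=(A+BK)\Pi+(BL+P)$ in $D(S)$; setting $\Gamma:=L+K\Pi$ gives the first regulator equation by direct rearrangement. The substance is the identity $C\Pi=Q$. I would fix $w_0\in D(S)$, take the initial state $z_0=\Pi w_0\in D(A)$, and use the error formula~\eqref{errorexpreswith(CPi-Q)} to reduce the error to $e(t)=(C\Pi-Q)T_S(t)w_0$. Writing $R:=C\Pi-Q\in\mathcal{L}(W,Y)$, the decay estimate~\eqref{ineqerror} then reads $\|RT_S(t)w_0\|\le M\,t^{-\beta}[\|\Pi w_0\|_A+\|w_0\|_S]$.

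The key step, and the main obstacle, is to upgrade this pointwise decay to the algebraic identity $C\Pi=Q$ without invoking admissibility. My idea is to run the estimate along the full orbit of $w_0$ under $T_S$. For every $s\in\mathbb{R}$ the vector $T_S(s)w_0$ lies in $D(S)$, and since $T_S$ is an isometric group commuting with $S$ one has $\|T_S(s)w_0\|_S=\|w_0\|_S$; using the Sylvester identity to rewrite $(A+BK)\Pi T_S(s)w_0$ together with the boundedness of $BK$, the graph norm $\|\Pi T_S(s)w_0\|_A$ is bounded by a constant $C\|w_0\|_S$ independent of $s$. Applying~\eqref{ineqerror} (whose constants $M,\beta$ do not depend on the data) to the initial pair $(\Pi T_S(s)w_0,\,T_S(s)w_0)$ yields $\|RT_S(t+s)w_0\|\le C'\,t^{-\beta}\|w_0\|_S$ uniformly in $s\in\mathbb{R}$. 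For arbitrary $\tau\in\mathbb{R}$ I then set $s=\tau-t$ and let $t\to\infty$, forcing $RT_S(\tau)w_0=0$ for all $\tau$; at $\tau=0$ this gives $(C\Pi-Q)w_0=0$, and density of $D(S)$ in $W$ delivers $C\Pi=Q$. The crux is exactly that the group lets one push the initial time backward while keeping the decay constant uniform, so that polynomial decay at a fixed instant can be made arbitrarily small.
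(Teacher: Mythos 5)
Your proposal is correct and follows essentially the same route as the paper: the ``if'' direction is delegated to the sufficiency theorem and its corollary from Section~\ref{section suffcond}, and the ``only if'' direction uses conformity to produce $\Pi$, the error formula with $z_0=\Pi w_0$, and then the backward shift $w_0\mapsto T_S(-t)w_0$ along the isometric group together with the uniformity of $M,\beta$ and the bound $\|\Pi T_S(s)w_0\|_A\leq C\|w_0\|_S$ to kill $C\Pi-Q$ on $D(S)$ and extend by density. The paper phrases this last step as a contradiction at a single well-chosen time $t_0$ rather than as a limit $t\to\infty$ along the orbit, but that is only a cosmetic difference.
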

  \begin{proof}
  From the above section, we need only to show the only if condition.
   Since $BL+P$ is conform for the semigroup $T_{A+BK}(t)$ then there exists $\Pi\in\mathcal{L}(W,Z)$
  such that $$\Pi S=(A+BK)\Pi+BL+P\hspace{0.5cm} in\; D(S).
$$
  Moreover, by (\ref{errorexpreswith(CPi-Q)}), the explicit tracking error is given by
$$e(t)=CT_{A+BK}(t)(z_{0}-\Pi w_{0})+(C\Pi-Q) T_{S}(t)w_{0}$$ for every $w_{0}\in W$ and $z_{0}\in Z$.
Assume that there exists $w_{0}\in D(S), w_{0}\neq0$ such that $\delta:=\|(C\Pi -Q)w_{0}\|>0$. Let $t_{0}>0$ be such that
\begin{equation}
M\,t_{0}^{-\beta}\,[\|\Pi\|_{\mathcal{L}(D(S),D(A+BK))}\|w_{0}\|_{S}+\|w_{0}\|_{S}]<\delta
\end{equation}
and set $\tilde{w}_{0}=T_{S}(-t_{0})w_{0}\in D(S)$ and $z_{0}=\Pi \tilde{w}_{0}\in D(A)$. Then the corresponding tracking error
satisfies
$$\begin{array}{ccl}
\|e(t_{0})\|&=&\|(C\Pi -Q)T_{S}(t_{0})\tilde{w}_{0}\|\\
&=&\|(C\Pi -Q)w_{0}\|=\delta\\
&>&M\,t_{0}^{-\beta}\,[\|\Pi\|_{\mathcal{L}(D(S),D(A+BK))}\|w_{0}\|_{S}+\|w_{0}\|_{S}].
\end{array}$$
Further,
 $$\|z_{0}\|_{A}=\|\Pi \tilde{w}_{0}\|_{A}\leq \|\Pi\|_{\mathcal{L}(D(S),D(A))}\|T_{S}(-t_{0})w_{0})\|_{S}
=\|\Pi\|_{\mathcal{L}(D(S),D(A))}\|w_{0}\|_{S}.$$ Therefore,
$$Mt_{0}^{-\beta}[\|z_{0}\|_{A}+\|w_{0}\|_{S}]\leq Mt_{0}^{-\beta}[\|\Pi\|_{\mathcal{L}(D(S),D(A))}\|w_{0}\|_{S}+\|w_{0}\|_{S}]<\|e(t_{0})\|$$
which is clearly a contradiction with the inequality (\ref{ineqerror}). Hence  $(C\Pi -Q)w_{0}=0$ for every $w_{0}\in D(S)$. Since $D(S)$ is dense in $W$ and $C\Pi -Q\in\mathcal{L}(W,Y)$, we conclude that
$$C\Pi -Q=0\quad \mbox{in} \;W.
$$
\end{proof}
\begin{rem}
In reviewing the previous proof, we note that the only if of the theorem remains true if we replace the inequality (\ref{ineqerror}) by the following
\begin{equation}
  \|e(t)\|\leq Mt^{-\beta}[\|z_{0}\|+\|w_{0}\|]
  \end{equation}
  for all $t>0$, some $\beta,M>0$ which do not depend on the initial conditions $z_{0}\in Z$ and $w_{0}\in W$.
\end{rem}

\section{Application: Periodic tracking for polynomially stabilizable SISO systems}
In this section, we shall solve the SFRP and the regulator equations (\ref{reg eq}) in the case that the reference and disturbance signals are periodic functions. Under some assumptions, in order to solve  SFRP, it is sufficient to verify that the operator $BL+P$ is conform for the semigroup $T_{A+BK}(t)$ for some operator $L\in\mathcal{L}(W,Z)$. Hence, the theorem (\ref{charac of sfrp}) allows to solve SFRP.  \\
We consider the system (\ref{plant}) with $U=Y=\mathbb{C}$. Assume that $Z$ is a Hilbert space with an inner product $\left<\cdot,\cdot\right>$. Define the operator $A\;:\;D(A)\subset Z\longrightarrow Z$  by
$$Az=\sum_{n\in\mathbb{Z}}\mu_{n}
\left<z,\psi_{n}\right>\phi_{n}$$
$$D(A)=\{z\in\mathbb{Z}\;|\;\sum_{n\in\mathbb{Z}}|\mu_{n}|^{2}|\left<z,\psi_{n}\right>|^{2}<\infty\}$$
where $(\phi_{n})_{n\in\mathbb{Z}}$ is a Riesz basis of $Z$ and $(\psi_{n})_{n\in\mathbb{Z}}$ is the corresponding biorthogonal sequence.
We assume that the set $\{\mu_{n}\}_{n\in\mathbb{Z}}$ is contained in $\mathbb{C}^{-}$ and that it has no accumulation points on $i\mathbb{R}$.  Assume further that the eigenvalues $\{\mu_{n}\}$ of the operator $A$ satisfies the following property:
there exist constants $\alpha,\;c>0$ and $d>0$ such that
\begin{equation}\label{geometric assumption}
Re\,\mu_{n}\leq\;-\,\frac{c}{|Im\,\mu_{n}|^{\alpha}}\qquad \mbox{if}\;|Im\,\mu_{n}|\geq\,d.
\end{equation}
It is clear that $A$ generates a bounded   $C_{0}$-semigroup, and  since $A$ is similar to a normal operator then, due to \cite[Proposition 4.1]{Batkai}, the above assumptions also imply it generates  polynomially stable $C_{0}$-semigroup and  $$
\|T_{A}(t)A^{-1}\|\leq N\,t^{-1/\alpha}
$$ for all $t>0$, or  equivalently
$$\|T_{A}(t)(-A)^{-\alpha}\|\leq \frac{N}{t}$$ for all $t>0$.

Now we introduce an interesting functions space whose elements can be generated by the exosystem ($2.2$) if its parameters are suitably chosen. Let $p>0$, let $\omega_{k}=\frac{2\pi k}{p}$ for $k\in\mathbb{Z}$, let $(f_{k})_{k\in\mathbb{Z}}\subset\mathbb{R}$ such that $f_{k}\geq1,\;k\in\mathbb{Z}$ and $(f_{k}^{-1})_{k\in\mathbb{Z}}\in \ell^{2}$\;(the space of square summable complex sequences). We define the state space of the exosystem as follows
\begin{equation*}W=\{y:\mathbb{R}\longrightarrow\mathbb{C}\,|\,y(t)=\sum_{k\in\mathbb{Z}}y_{k}e^{i\omega_{k}t},\,\sum_{k\in\mathbb{Z}}|y_{k}|^{2}f_{k}^{2}<\infty\;,(y_{k})_{k\in I}\subset\mathbb{C}\}.
\end{equation*}
Put $\theta_{k}(t)=e^{i\omega_{k}t}$ for all $t\in\mathbb{R}$ and  $k\in\mathbb{Z}$. It is clear that the set $\{\theta_{k}\}_{k\in\mathbb{Z}}$ form an orthonormal basis in $W$ with the $L^{2}$-inner product which is denoted by $\left<\cdot,\cdot\right>_{L^{2}}$. But it shall be interesting to use the inner product in $W$ defined by $$\left<u,y\right>_{f}=\sum_{k\in\mathbb{Z}}u_{k}\overline{y_{k}}f_{k}$$ where $u_{k}=\left<u,\theta_{k}\right>_{L^{2}}$ and $y_{k}=\left<y,\theta_{k}\right>_{L^{2}}$ for all $k\in\mathbb{Z}$. It is easy to see that $W$ is a Hilbert space with the inner product $\left<\cdot,\cdot\right>_{f}$ and   the corresponding norm is $$\|y\|_{f}=\sqrt{\sum_{k\in\mathbb{Z}}|\left<y,\theta_{k}\right>|^{2}f_{k}^{2}}.
$$ 
Thus the set $\{\theta_{k}\}_{k\in\mathbb{Z}}$, with this inner product, form an orthogonal basis in $W$, with $\|\theta_{k}\|_{f}=f_{k}$ for all $k\in\mathbb{Z}$.
Define the operator
\begin{equation*}
S=\sum_{k\in\mathbb{Z}}i\omega_{k}<\cdot,\theta_{k}>_{L2}\theta_{k},\quad 
D(S)=\{y\in W\;|\;\sum_{k\in\mathbb{Z}}\omega_{k}^{2}|\left<y,\theta_{k}\right>_{L^{2}}|^{2}f_{k}^{2}<\infty\}.
\end{equation*}
Notice that the operator $S$ generates an isometric $C_{0}$-group on $W$ given by
\begin{equation*}
T_{S}(t)=\sum_{k\in\mathbb{Z}}
e^{i\omega_{k}t}\left<\cdot,\theta_{k}\right>\theta_{k}.
\end{equation*}
We fix $P\in\mathcal{L}(W,Z)$ and $Q=\delta_{0}$ (Dirac mass at $0$). Then the exosystem (\ref{exo}) with $W,S,Q,P\in\mathcal{L}(W,Z)$ and $w_{0}\in W$ can generate all reference signals in $W$ and only those. Moreover, every reference function $y_{r}\in W$ is generated by the choice $w_{0}=y_{r}\in W$. In fact $$\delta_{0}T_{S}(t)h=h(x+t)|_{x=0}=h(t)$$ for every $h\in W$ and $t\in\mathbb{R}$,  see  \cite{Immonen} for more details. Notice that $Q=\delta_{0}\in\mathcal{L}(W,Z)$ because if $y=\sum_{k\in\mathbb{Z}}\left<y,\theta_{k}\right>_{L^{2}}\theta_{k}$, we have
\begin{align*}
  |\delta_{0}y| & =  |\sum_{k\in\mathbb{Z}}\left<y,\theta_{k}\right>_{L^{2}}| \\
   & \leq  \sqrt{\sum_{k\in\mathbb{Z}}f_{k}^{-2}}\sqrt{\sum_{k\in\mathbb{Z}}|\left<y,\theta_{k}\right>_{L^{2}}|^{2}f_{k}^{2}} \\
   & =  c\|y\|_{f}.
\end{align*}
Our goal is to construct a control law $u(t)=Kz(t)+Lw(t)$ for the asymptotic tracking of periodic reference signals in the presence of the disturbance $\mathcal{U}_{d}(t)$. Since $A$ generates a bounded polynomially stable $C_{0}$-semigroup, we can choose $K=0$. To this end we need some assumptions\\
\textbf{Assumption $1$:} $H(i\omega_{k}):=CR(i\omega_{k},A)B\neq0$ for all $k\in\mathbb{Z}$.\\
\textbf{Assumption $2$:} $(H(i\omega_{k})^{-1}[1-H_{d}(k)]f_{k}^{-1})_{k\in\mathbb{Z}}\in\ell^{2}$, where $H_{d}(k):=CR(i\omega_{k},A)P\theta_{k}.$\\
With these assumptions, it is clear that the operator
\begin{equation}
Ly:=\sum_{k\in\mathbb{Z}}H(i\omega_{k})^{-1}[1-H_{d}(k)]\left<y,\theta_{k}\right>_{L^{2}}
\end{equation}
is bounded from $W$ to $\mathbb{C}$.

The main result of this section is the following.
\begin{thm}\label{Applic per}
Assume that \textbf{Assumption $1$} and \textbf{Assumption $2$} are satisfied. Suppose further that $BL+P$ is a conform operator for the semigroup $T_{A}(t)$.
 Then the SFRP is solvable using $u(t)=Lw(t)$. Moreover, for every $y_{r}\in W$ the corresponding control law $u_{y_{r}}$ which achieves the asymptotic tracking of $y_{r}$ in the presence of the disturbance $\mathcal{U}_{d}(t)$ is given, for all $t\geq0$, by
$$u_{y_{r}}(t)=\sum_{k\in\mathbb{Z}}H(i\omega_{k})^{-1}[1-H_{d}(k)]\left<y_{r},\theta_{k}\right>_{L^{2}}e^{i\omega_{k}t}.$$
\end{thm}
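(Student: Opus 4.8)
The plan is to verify the two regulator equations \eqref{reg eq} for the pair $(\Pi,\Gamma)$ with $K=0$ and $\Gamma=L$, and then to invoke Theorem \ref{charac of sfrp}. Note that $T_S(t)$ is $p$-periodic, since $e^{i\omega_k p}=e^{2\pi i k}=1$, so by the remark following the definition of admissible reference signals the exosystem generates admissible reference signals; moreover $T_A(t)$ is bounded polynomially stable by hypothesis and $A+BK=A$. Because $BL+P$ is assumed conform for $T_A(t)$, Lemma \ref{characonform} immediately furnishes a bounded operator $\Pi\in\mathcal{L}(W,Z)$ with $\Pi(D(S))\subset D(A)$ solving the first regulator equation
\[
\Pi S=A\Pi+(BL+P)\quad\text{in }D(S).
\]
Thus the entire difficulty is concentrated in the second equation $C\Pi=Q=\delta_0$, which I would establish by evaluating $C\Pi$ on the orthonormal basis $\{\theta_k\}_{k\in\mathbb{Z}}$.

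First I would record that each $\theta_k$ lies in $D(S)$ and is an eigenvector, $S\theta_k=i\omega_k\theta_k$. Applying the first regulator equation to $\theta_k$ gives
\[
(i\omega_k-A)\Pi\theta_k=(BL+P)\theta_k.
\]
Since $\{\mu_n\}\subset\mathbb{C}^-$ has no accumulation point on $i\mathbb{R}$, the purely imaginary number $i\omega_k$ lies in $\rho(A)$, so I may invert to obtain $\Pi\theta_k=R(i\omega_k,A)(BL+P)\theta_k$. Using the orthonormality $\langle\theta_k,\theta_j\rangle_{L^2}=\delta_{kj}$, the definition of $L$ collapses to the scalar $L\theta_k=H(i\omega_k)^{-1}[1-H_d(k)]$, whence
\[
\Pi\theta_k=H(i\omega_k)^{-1}[1-H_d(k)]\,R(i\omega_k,A)B+R(i\omega_k,A)P\theta_k.
\]

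Applying $C$ and recalling $H(i\omega_k)=CR(i\omega_k,A)B$ and $H_d(k)=CR(i\omega_k,A)P\theta_k$, everything telescopes:
\[
C\Pi\theta_k=[1-H_d(k)]+H_d(k)=1=\theta_k(0)=\delta_0\theta_k=Q\theta_k.
\]
Since $\mathrm{span}\{\theta_k\}$ is dense in $W$ and both $C\Pi$ and $Q$ are bounded, this yields $C\Pi=Q$ on all of $W$, completing the verification of \eqref{reg eq}. Theorem \ref{charac of sfrp} then gives that $u(t)=Lw(t)$ solves the SFRP. For the explicit control law, I would take $w_0=y_r$, so that $w(t)=T_S(t)y_r=\sum_k e^{i\omega_k t}\langle y_r,\theta_k\rangle_{L^2}\theta_k$; applying the bounded operator $L$ term by term and using $L\theta_k=H(i\omega_k)^{-1}[1-H_d(k)]$ produces exactly the claimed series for $u_{y_r}(t)=LT_S(t)y_r$.

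The computation is essentially a verification built into the very definition of $L$, so the genuine points requiring care are: (i) confirming $i\omega_k\in\rho(A)$ and the invertibility of $i\omega_k-A$ from the spectral hypotheses on $\{\mu_n\}$; (ii) the density-plus-boundedness extension from $\{\theta_k\}$ to $W$, for which \textbf{Assumption 2} (ensuring $L\in\mathcal{L}(W,\mathbb{C})$) is essential; and (iii) justifying the interchange of $L$ with the infinite sum defining $T_S(t)y_r$, which follows from continuity of $L$ together with convergence in the $f$-weighted norm. I expect (ii)--(iii), the bookkeeping between the $L^2$- and the $f$-weighted inner products, to be the only mildly delicate part; the conformity hypothesis is what removes any analytic obstruction by handing us the bounded solution $\Pi$ for free.
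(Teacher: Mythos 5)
Your proposal is correct and follows essentially the same route as the paper: obtain $\Pi$ from the conformity of $BL+P$, compute $C\Pi\theta_k=H(i\omega_k)L\theta_k+H_d(k)=1=\delta_0\theta_k$ on the basis vectors, extend by density and boundedness to get $C\Pi=Q$, and read off $u_{y_r}=LT_S(\cdot)y_r$ term by term. The only cosmetic difference is that you derive $\Pi\theta_k=R(i\omega_k,A)(BL+P)\theta_k$ by applying the Sylvester equation to the eigenvector $\theta_k$ and inverting $i\omega_k-A$, whereas the paper evaluates the integral formula $\Pi\theta_k=\int_0^\infty e^{-i\omega_k t}T_A(t)(BL+P)\theta_k\,dt$ directly; both are equivalent here.
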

\proof
Since $BL+P$ is conform for the semigroup $T_{A+BK}(t)$ then there exists $\Pi\in\mathcal{L}(W,Z)$
such that $$\Pi S=(A+BK)\Pi+BL+P.$$ Furthermore, $$\Pi y=\int_{0}^{\infty}T_{A+BK}(t)(BL+P)T_{S}(-t)ydt$$ for all $y\in D(S)$. In particular we have,
\begin{equation*}
\Pi \theta_{k}=\int_{0}^{\infty}T_{A}(t)(BL+P)T_{S}(-t)\theta_{k}dt=\int_{0}^{\infty}e^{-i\omega_{k}t}T_{A}(t)(BL+P)\theta_{k}dt
\end{equation*}
 for all $k\in\mathbb{Z}$.
 Hence,
 \begin{equation}
 \Pi\theta_{k}=R(i\omega_{k},A)(BL+P)\theta_{k}
 \end{equation}
 and therefore,
\begin{equation}\label{(5.4)}
C\Pi\theta_{k}=CR(i\omega_{k},A)(BL+P)\theta_{k}.
\end{equation}
On the other hand, by  \textbf{Assumption $1$}  and \textbf{Assumption $2$}, we have $$H(i\omega_{k})=CR(i\omega_{k},A)B\quad\mbox{and}\quad H_{d}(k)=CR(i\omega_{k},A)P\theta_{k}.$$
Then,
\begin{align*}
  CR(i\omega_{k},A)(BL+P)\theta_{k} & =  
H(i\omega_{k})L\theta_{k}+H_{d}(k) \\
   & =  H(i\omega_{k}) H(i\omega_{k})^{-1}[1-H_{d}(k)]+H_{d}(k) \\
   & =  1=\delta_{0}\theta_{k}.
\end{align*}
Therefore, from (\ref{(5.4)}), we deduce that $$C\Pi\theta_{k}=\delta_{0}\theta_{k}.$$
 Now, consider $y=\sum_{k\in\mathbb{Z}}\left<y,\theta_{k}\right>\theta_{k}\in W$. Since $\Pi\in\mathcal{L}(W,Z)$ then
 \begin{equation*}
 C\Pi y=\sum_{k\in\mathbb{Z}}\left<y,\theta_{k}\right>C\Pi\theta_{k}=\sum_{k\in\mathbb{Z}}\left<y,\theta_{k}\right>=y(0)=\delta_{0} y.
 \end{equation*}
 Consequently, the second regulator equation is also satisfied and  $SFRP$ is solvable using $u(t)=Lw(t)$.
More precisely, for $y_{r}\in W$, the corresponding control law $u_{y_{r}}$ which achieves the asymptotic tracking of $y_{r}$ is given, for all $t\geq0$, by
\begin{align*}
  u_{y_{r}}(t) & = LT_{S}(t)y_{r} \\
   & = \sum_{k\in\mathbb{Z}}\left<y_{r},\theta_{k}\right>_{L^{2}}
e^{i\omega_{k}t}L\theta_{k} \\
   & = \sum_{k\in\mathbb{Z}}H(i\omega_{k})^{-1}[1-H_{d}(k)]\left<y_{r},\theta_{k}\right>_{L^{2}}e^{i\omega_{k}t}.
\end{align*}
\begin{rem}
$\bullet$ Under the assumptions of the above theorem, the operator $\Pi$ can be given explicitly by
$$\Pi y=\sum_{k\in\mathbb{Z}}\left<y,\theta_{k}\right>R(i\omega_{k},A)(BL+P)\theta_{k}.$$
$\bullet$ If there exists $\varepsilon>0$ such that $(BL+P)\in\mathcal{L}(W,Z_{\alpha+\varepsilon})$ then the operator $BL+P$ is conform
for the semigroup $T_{A}(t)$ and therefore we can apply  results of the above theorem.
\end{rem}
Next, we present two concrete examples of operators $A$ considered above.
\begin{exm}[\textbf{Periodic tracking of perturbed wave equation}]
\end{exm}
Consider the following perturbed one-dimensional wave equation on $(0.1)$\\
\begin{align*}
\frac{\partial^{2}v}{\partial t^{2}}(x,t) & =  \frac{\partial^{2}v}{\partial x^{2}}(x,t)+b_{0}(x)[h_{1}(x)v(x,t)+h_{2}(x)\frac{\partial v}{\partial t}(x,t)] \\
v(0,t) &=  v(1,t)=\;0 \\
v(x,0)&=v_{0}, \quad \frac{\partial v}{\partial t}(x,0)=v_{1},
\end{align*}
where $b_{0}(x)=\sqrt{3}(1-x)$, $h_{1}$ and $h_{2}$ are defined in \cite[Theorem 13]{Paunonen} by
$$
h=\left[\begin{array}{c}
                  h_{1} \\
                  h_{2}
                \end{array}\right]=-\frac{\nu\pi^{2}}{\sqrt{3}}\sum_{k\neq0}\frac{\overline{\alpha_{k}}}{k}\psi_{k},\quad \mbox{where}\;\alpha_{k}=\prod_{l\neq0,k}(1+i\frac{\nu}{l^{2}(l-k)})$$
                for some $0<\nu\leq1$ with $\psi_{k}(x)=\frac{1}{\lambda_{k}}\left[\begin{array}{c}
                                    sin(k\pi x) \\
                                    \lambda_{k}sin(k\pi x)
                                  \end{array}\right]$ and $\lambda_{k}=ik\pi$ for $k\in \mathbb{Z}\backslash\{0\}$.\\
Define the operator $A_{0}\,:\,D(A_{0})\subset L^{2}(0,1)\longrightarrow L^{2}(0,1)$ by
$$A_{0}=-\frac{d^{2}}{dx^{2}}$$ with domain
$D(A_{0})=\left\{v\in L^{2}(0,1)|v,v' \mbox{ abs. cont. } v''\in L^{2}(0,1), v(0)=v(1)=0\right\}.$
The operator $A_{0}$ has a positive self-adjoint square root $A_{0}^{1/2}$ and the space $Z=D(A_{0}^{1/2})\times L^{2}(0,1)$ equipped with an inner product $$\langle v,w\rangle_{Z}=\langle A_{0}^{1/2}v_{1},A_{0}^{1/2}w_{1}\rangle_{L^{2}}+\langle v_{2},w_{2}\rangle_{L^{2}}$$ is a Hilbert space. Next we introduce $$z=\left[\begin{smallmatrix}
                  v \\
                  \frac{dv}{dt}
                \end{smallmatrix}\right], \quad A=\left[\begin{matrix}
                                    0 & I \\
                                        -A_{0} & 0                                          
\end{matrix}\right], \quad \Delta=\left[\begin{matrix}
                0 & 0 \\
b_{0}h_{1} & b_{0}h_{2}                                          
\end{matrix}\right],
$$
$$
D(A)=D(A_{0})\times D(A_{0}^{1/2}).$$
Notice that $\Delta\in\mathcal{L}(Z)$. Introduce also the control operator $B\xi=b\xi=\left[\begin{smallmatrix}
                  0 \\
                  b_{1}
                \end{smallmatrix}\right]\xi$,    where $b_{1}(x)=x(1-x)$ and the observation operator $C=\sum_{k\neq0}\left<\cdot,\phi_{k}\right>\overline{\left<b,\phi_{k}\right>}\in \mathcal{L}(Z,\mathbb{C})$. Thus the above perturbed wave equation can be written as
\begin{align}
 \begin{cases}
    \dot{z}(t)  &=  (A+\Delta)z(t)+Bu(t),
\\
  z(0)&= z_{0}=
\left[\begin{smallmatrix}
                  v_{0} \\
                  v_{1}
                \end{smallmatrix}
\right]\in Z,\; \\
    y(t) & =  Cz(t),\;t\geq0. 
  \end{cases}
\end{align}
The eigenvalues of $A$ are $\lambda_{k}$ and the corresponding eigenvectors are  $\psi_{k}$ form an orthonormal basis in $Z$.
Due to Paunonen \cite{Paunonen}, the perturbed operator $\tilde{A}:=A+\Delta$ is a Riesz-spectral operator and that $\sigma(\tilde{A})=\{\frac{-\nu\pi}{k^{2}}+ik\pi\}_{k\neq0}$. Further, the eigenvectors $\phi_{k}$ of $\tilde{A}$ form a Riesz basis of $Z$. Since $\tilde{A}$ is similar to a normal operator then, due to \cite[Theorem 4.1]{Batkai}, $\tilde{A}$ generates a polynomially stable $C_{0}$-semigroup $T_{\tilde{A}}(t)$ and we have
\begin{equation}\label{polywave}
\|T_{\tilde{A}}(t)\tilde{A}^{-1}\|\leq \frac{N}{\sqrt{t}}
\end{equation}
for all $t>0$ and some $N>0$.
Since $T_{\tilde{A}}(t)$ is bounded then the estimate (\ref{polywave}) is equivalent to
\begin{equation}\label{polywith r}
\|T_{\tilde{A}}(t)(-\tilde{A})^{-2r}\|\leq \frac{N}{t^{r}}
\end{equation}
for all $r>0,t>0$. Since $\sigma(\tilde{A})\subset\mathbb{C}^{-}$ and it has no finite accumulation points on $i\mathbb{R}$, the operator $-\tilde{A}$ is an invertible sectorial operator. Using the fractional domains of the operator $-\tilde{A}$, we have, for $\beta\geq0$, 
$$
Z_{\beta}=D((-\tilde{A})^{\beta})=\{z\in Z|\sum_{k\neq0}|\mu_{k}|^{2\beta}|<z,\phi_{k}>|^{2}<\infty\}.
$$
The space $(Z_{\beta},\|\cdot\|_{\beta})$  is a Hilbert space with norm defined by
$$\|z\|^{2}_{\beta}=\sum_{k\neq0}|\mu_{k}|^{2\beta}|<z,\phi_{k}>|^{2}$$
where $\mu_{k}=\frac{-\nu\pi}{k^{2}}+ik\pi$ for $k\neq0$.\\
In order to apply the above results, we shall consider asymptotic tracking of the $p$-periodic reference signals in the above space $W$. Consider the exogenous system (\ref{exo}) with $W,S,Q=\delta_{0},P=0$ and $w_{0}\in W$.
To verify the {\bf Assumption $1$}, we have
\begin{equation}
R(i\omega_{k},\tilde{A})=\sum_{n\neq0}\frac{<\cdot,\phi_{n}>\Phi_{n}}{i\omega_{k}-\mu_{n}}
\end{equation}
for all $k\in\mathbb{Z}$. Hence the transfer function evaluated at the frequencies $i\omega_{k}$ is
\begin{equation}
H(i\omega_{k})=CR(i\omega_{k},\tilde{A})B=\sum_{n\neq0}\frac{|<b,\phi_{n}>|^{2}}{i\omega_{k}-\mu_{n}}.
\end{equation}
For a suitable choice of $b\in Z$, we can verify that $H(i\omega_{k})\neq0$ for all $k\in\mathbb{Z}$.
 On the other hand, put
\begin{equation}\label{express L}
Ly:=\sum_{k\in\mathbb{Z}}H(i\omega_{k})^{-1}\left<y,\theta_{k}\right>_{L^{2}},
\end{equation}
for $y\in W$ such that $L\in\mathcal{L}(W,\mathbb{C})$. We saw that the operator $L$ is bounded if and only if the condition
\begin{equation}\label{condition of L}
\sum_{k\in\mathbb{Z}}|H(i\omega_{k}|^{-2}|f_{k}|^{-2}<\infty.
\end{equation}
holds. Now, we show that $BL$ is a conform operator. To this end we verify that $b\in Z_{2+\varepsilon}=D((-\tilde{A})^{2+\varepsilon})$ for some $\varepsilon>0$.  We have
$$\left<b,\psi_{k}\right>_{Z}=\left<b_{1},\sin k\pi\cdot\right>_{L^{2}}=\int_{0}^{1}b_{1}(x)\sin k\pi x\,dx$$ for all $k\in \mathbb{Z}$. An easy computation shows that
 $$<b,\psi_{k}>_{Z}=\frac{2(1-(-1)^{k})}{k^{3}\pi^{3}}.$$
Since $\mid\mu_{k}\mid=O(k^{2})$ as $k\longrightarrow\infty$ then $$\mid\mu_{k}\mid^{2(2+\varepsilon)}\mid<b,\psi_{k}>_{Z}\mid^{2}=O(k^{2\varepsilon-2})$$
Hence the series $$\sum_{k\neq0}|\mu_{k}|^{2(2+\varepsilon)}|<z,\psi_{k}>|^{2}<\infty$$ for $0<\varepsilon<\frac{1}{2}$. Thus if we choose  $0<\varepsilon<\frac{1}{2}$, the operator $BL$ is bounded from $W$ to $Z_{2+\varepsilon}$ and due to the Proposition (\ref{suffcondpoly}), we conclude that $BL$ is a conform operator for the semigroup $T_{\tilde{A}}(t)$ and the first regulator equation of (\ref{reg eq}) has a bounded solution given by
$$\Pi y:=\int_{0}^{\infty}T_{\tilde{A}}(t)BL T_{S}(-t)y dt$$
for all $y\in D(S)$. Thus, we can use  Theorem \ref{Applic per} to conclude that  $SFRP$ is solvable using $u(t)=Lw(t)$.
More precisely, for $y_{r}\in W$, the corresponding control law $u_{y_{r}}$ which achieves the asymptotic tracking of $y_{r}$ is given, for all $t\geq0$, by
   $$u_{y_{r}}(t)=\sum_{k\in\mathbb{Z}}H(i\omega_{k})^{-1}\left<y_{r},\theta_{k}\right>_{L^{2}}e^{i\omega_{k}t}.$$

\begin{exm}
\end{exm}
Let $Z$ be a Hilbert space with an inner product $<\cdot,\cdot>$ and an orthonormal basis $(\psi_{n})_{n\in\mathbb{Z}}$. Consider a linear control system (\ref{plant}) where $A=\sum_{n\in\mathbb{Z}}\mu_{n}
\left<\cdot,\psi_{n}\right>\psi_{n}$ with $\mu_{n}=-\frac{1}{1+|n|}+i\omega_{n}$ and
$$D(A)=\{z\in Z\,|\,\sum_{n\in\mathbb{Z}}|\mu_{n}|^{2}|\left<z,\psi_{n}\right>|^{2}<\infty\},$$ $B\xi=\psi_{0}\xi$ for all $\xi\in\mathbb{C}$,\,$C=\left<\cdot,\psi_{0}\right>$ and $P=0$. As in the beginning of this section, we see that $A$ generates a bounded polynomially stable $C_{0}$-semigroup $T_{A}(t)$ with $\alpha=1$.\\
we shall consider asymptotic tracking of  $p$-periodic reference signals in the space $\mathcal{H}_{\gamma}=W$ with $f_{n}=\sqrt{1+\omega_{n}^{2}}^{\gamma}$ where $\gamma>\frac{1}{2}$. Notice that the condition $\gamma>\frac{1}{2}$ is necessary so that $(f_{n}^{-1})_{n\in\mathbb{Z}}\in\ell^{2}$ but not sufficient for the boundedness of the operator $L$ defined as above.
Since $A$ is a Riesz spectral operator \cite{Curtain}, we have $$R(\lambda,A)=\sum_{n\in\mathbb{Z}}\frac{1}{\lambda-\mu_{n}}\left<\cdot,\psi_{n}\right>\psi_{n},\quad \lambda\in \rho(A).
$$
Hence we have $R(\lambda,A)B=\frac{1}{\lambda+1}\psi_{0}$. Since $(\psi_{n})_{n\in\mathbb{Z}}$ is an orthonormal basis then the transfer function of  the plant satisfies $$H(i\omega_{k})=CR(i\omega_{k})B=\frac{1}{1+i\omega_{k}}\neq0,\quad  k\in\mathbb{Z}.
$$
Now,  we define the operator $L$ as in (\ref{express L}) and show that $L$ is bounded for a suitable choice of $\gamma$. In fact, by using the Schwartz inequality we have $$\|Ly\|\leq\sum |[1+i\omega_{k}]\left<y,\theta_{k}\right>_{L^{2}}|\leq \sqrt{\sum_{n\in\mathbb{Z}}|\left<y,\theta_{k}\right>_{L^{2}}|^{2}(1+i\omega_{n}^{2})^{\gamma}}
\sqrt{\sum_{n\in\mathbb{Z}}\frac{1+\omega_{n}^{2}}{(1+\omega_{n}^{2})
^{\gamma}}}.
$$
Since $\|y\|_{f}=\sqrt{\sum_{n\in\mathbb{Z}}|\left<y,\theta_{k}\right>_{L^{2}}|^{2}(1+i\omega_{n}^{2})^{\gamma}}$ then the operator $L$ is bounded if $$\sum_{n\in\mathbb{Z}}\frac{1+\omega_{n}^{2}}{(1+\omega_{n}^{2})^{\gamma}}
<\infty.$$ This can be verified whenever $\gamma>\frac{3}{2}$. Now, to apply the Theorem \ref{Applic per} we shall verify that $BL$ is conform operator for the semigroup $T_{A}(t)$. We have $B\xi=\psi_{0}\xi$ for all $\xi\in\mathbb{C}$. Since $\psi_{0}\in Z_{\eta}=D((-A)^{\eta})$ for all $\eta>0$, clearly $B\in\mathcal{L}(\mathcal{H}_{\gamma},\mathbb{C})$ and consequently $BL\in\mathcal{L}(\mathcal{H}_{\gamma},Z_{\eta})$ whenever $\gamma>^{}\frac{3}{2}$. 
Due to  Proposition (\ref{suffcondpoly}), we conclude that $BL$ is a conform operator for the semigroup $T_{\tilde{A}}(t)$ and the results of Theorem (\ref{Applic per}) can be applied to conclude that, for $\gamma>\frac{3}{2}$ the control law $u(t)=LT_{S}(t)y_{r}=\displaystyle
\sum_{n\in\mathbb{Z}}\left<y_{r},\theta_{n}\right>_{L^{2}}[1+i\omega_{n}]e^{i\omega_{n}t}$ achieves asymptotic tracking of an arbitrary $y_{r}\in\mathcal{H}_{\gamma}$.

\end{document}